\newtheorem{assumption}{Assumption}
\newtheorem{definition}{Definition}
\newtheorem{proposition}{Proposition}
\newtheorem{remark}{Remark}
\newtheorem{lemma}{Lemma}
\newtheorem{corollary}{Corollary}
\newcommand{\RN}[1]{%
	\textup{\uppercase\expandafter{\romannumeral#1}}%
}
\def\BState{\State\hskip-\ALG@thistlm}
\begin{document}

\begin{frontmatter}
%\runtitle{Insert a suggested running title}  % Running title for regular 
                                              % papers but only if the title  
                                              % is over 5 words. Running title 
                                              % is not shown in output.

\title{Online Convex Optimization Using Coordinate Descent Algorithms\thanksref{footnoteinfo}} % Title, preferably not more 
% than 10 words.

\thanks[footnoteinfo]{This paper was not presented at any conference. This work was supported by the Australian Research Council under the Discovery Projects DP170104099, DP210102454, and the Australian Government, via grant AUSMURIB000001 associated with ONR MURI grant N00014-19-1-2571. Corresponding author Y.~Lin.}

\author[Paestum]{Yankai Lin}\ead{y.lin2@tue.nl},    % Add the 
\author[Rome]{Iman Shames}\ead{iman.shames@anu.edu.au},               % e-mail address 
\author[Baiae]{Dragan Nesic}\ead{dnesic@unimelb.edu.au}  % (ead) as shown
%Ne\v{s}i\'{c}
\address[Paestum]{Department of Mechanical Engineering, Eindhoven University of Technology, the Netherlands}  % Please supply                                              
\address[Rome]{CIICADA Lab, School of Engineering, The Australian National University, Acton, 0200, ACT, Australia}             % full addresses
\address[Baiae]{Department of Electrical and Electronic Engineering, The University of Melbourne, Parkville, 3010, Victoria, Australia}   
          
\begin{keyword}                           % Five to ten keywords,  
Online convex optimization; Coordinate descent; Online learning; Regret minimization.              % chosen from the IFAC 
\end{keyword}                             % keyword list or with the 
                                          % help of the Automatica 
                                          % keyword wizard

\begin{abstract}                          % Abstract of not more than 200 words.
This paper considers the problem of online optimization where the objective function is time-varying. In particular, we extend coordinate descent type algorithms to the online case\color{black}, where the objective function varies after a finite number of iterations of the algorithm. Instead of solving the problem exactly at each time step, we only apply a finite number of iterations at each time step\color{black}. Commonly used notions of regret are used to measure the performance of the online algorithm. Moreover, coordinate descent algorithms with different updating rules are considered, including both deterministic and stochastic rules that are developed in the literature of classical offline optimization. A thorough regret analysis \color{black} is \color{black} given for each case. Finally, numerical simulations are provided to
illustrate the theoretical results.
\end{abstract}

\end{frontmatter}

\section{Introduction}
\color{black}Online learning \cite{Shwartz12Found},  resource allocation \cite{chen2017online}, demand response in power systems \cite{LT18}, and localization of moving targets \cite{BSR18} are just a few examples where online convex optimization (OCO) has been applied\color{black}. In the problem setup of OCO, the objective functions are time-varying and are not available to the decision maker \emph{a priori}. At each time instance, after an update of the decision variable, new information of the latest cost function is made available to the decision maker. The objective of the decision maker is to minimize the objective function over time. One commonly used performance measure of an online optimization algorithm is the notion of regret which measures the \color{black} suboptimality \color{black} of the algorithm \color{black} compared \color{black} to the outcome generated by the best \color{black} decision at each time step.\color{black} 

In the seminal work of \cite{zinkevich2003online}, the method of online gradient descent is proposed for OCO problems, where at each time step the decision maker performs one gradient descent step using the latest available information. A static regret upper bound that is sublinear in $T$ is proved, where $T$ is the length of the horizon. Under stronger assumptions on the cost functions such as strong convexity, an improved logarithmic regret bound can be achieved \cite{hazan2007logarithmic,mokhtari2016online,garber2016linearly}. If future information is available, it can be used to further improve the performance of the online optimization algorithm in terms of regret bounds. The work \cite{lesage2020predictive} introduces an additional predictive step following the algorithm developed in \cite{zinkevich2003online}, if certain conditions on the estimated gradient and descent direction are met. Similar algorithms have also been extended to cases where zeroth order \cite{cao2019online,shames2019online,yi2020distributed,pang2023randomized,xiong2022distributed} and second order \cite{lesage2020second} oracles are used instead of (sub)gradients\color{black}. The works \cite{cao2019online,yi2020distributed} on bandit feedback consider the situation where there are time-varying inequality constraints\color{black}. In such cases the algorithms proposed in \cite{zinkevich2003online} will be hard to implement because of the high computational resource demand of the projection operation. This motivates recent research on online optimization algorithms with time-varying constraints including the primal-dual algorithm proposed in \cite{jenatton2016adaptive,yi2023regret}, a modified saddle-point method given in \cite{chen2017online}. Other algorithms are also proposed to handle stochastic constraints \cite{yu2017online} and cover continuous-time applications \cite{paternain2017online}. In the case where only the values rather than the exact form of the cost function at are revealed to the decision maker, bandit feedback based online algorithms \cite{cao2019online,yi2020distributed} can be used to solve the problem, other methods such as forward gradient \cite{mafakheri2022first} have also been proposed recently to deal with the issue\color{black}. The need for applications in large-scale systems has also led to extensive research on distributed OCO. Distributed online algorithms that achieve sublinear regret bound for convex optimization problems with static constraints can be found in \cite{shahrampour2018distributed,hosseini2016online,yan2013distributed}. For instance, \cite{shahrampour2018distributed} proposes a distributed version of the dynamic mirror descent algorithm which is a generalization of the classical gradient descent methods suitable for high-dimensional optimization problems. The work \cite{li2018distributed} proposes distributed online primal{-}dual algorithms for optimization problems with static coupled inequality constraints while the work \cite{yi2020distributedsp} studies distributed online convex optimization with time-varying inequality constraints in the discrete-time setting\color{black}. For a more detailed documentation of recent advances of online optimization, we refer the readers to the survey paper \cite{li2022survey}. 

\color{black}To the best of our knowledge, Coordinate descent \cite{Wright}, as an important class of optimization algorithms, is not sufficiently analyzed by researchers in the online optimization community.  In coordinate descent algorithms, most components of the decision variable are fixed during one iteration while the cost function is minimized with respect to the remaining components of the decision variable. The resulting problem is lower-dimensional and often much easier to solve. Thus, coordinate descent algorithms have great potential in applications such as machine learning, where iteration with full gradient information is computationally expensive. In \cite{nesterov2012efficiency}, it is shown that for huge scale problems, coordinate descent can be very efficient. Another situation where one may find coordinate descent useful is dual decomposition based methods for distributed optimization, see \cite{lin2021asynchronous} and references therein. Specifically, the dual problem of multi-agent optimal consensus results in a sum of functions with very loose coupling between the dual variables. Calculation of a component of the gradient of the dual function only involves computations and communications of a pair of agents (or processors). Moreover, it can also be implemented in a parallel fashion as shown in \cite{BT}. Therefore, sufficient effort has been made recently by researchers to develop theoretical performance guarantees of various coordinate descent algorithms \cite{Wright}. In this paper, we aim to extend this appealing algorithm to solve OCO problems by providing an in-depth regret analysis for different types of online coordinate descent algorithms. \color{black}

The main contributions of the paper can be summarized as follows. First, we extend the coordinate descent algorithms considered in \cite{Wright} to the online case and provide their regret analysis. To the best of our knowledge, this is the first attempt to look at possibilities of using coordinate descent methods to solve OCO problems. Second, we provide an in-depth regret analysis of various coordinate descent algorithms with different rules, such as cyclic updating rules and random updating rules. Specifically, we consider both random and deterministic online coordinate descent algorithms under assumptions commonly used in the literature\color{black}. In particular, most existing literature on OCO are based on extensions of offline algorithms that monotonically reduce the distance from the decision variable to the set of solutions at each iteration. An example is the well-known online gradient descent \cite{zinkevich2003online,hazan2007logarithmic}. However, offline deterministic coordinate descent algorithm, although has provable convergence properties to the set of solutions, does not necessarily result in an updated variable that is closer to the set of solutions at each iteration. We overcome this issue by using predictive like updates at each time which are detailed in Section \ref{RB-D}\color{black}. Lastly, we show that the regret bounds achieved by our online coordinate descent algorithms are comparable to those achieved by the literature on centralized full-gradient based online algorithms.\color{black}

\color{black}We summarize the theoretical upper bounds of regrets we prove in Theorem \ref{T1-Ch4} to Theorem \ref{T8-Ch4} for Algorithms \ref{alg1-Ch4}, \ref{alg4-Ch4}, \ref{alg5-Ch4} in the following table.
\begin{table}[h!]
	\caption{Regret bounds proved in the paper with $C_T=\sum_{t=1}^T|x_t^*-x_{t-1}^*|,C_{T,2}=\sum_{t=1}^T|x_t^*-x_{t-1}^*|^2$ and $C$ stands for convex cases $SC$ stands for strongly convex cases.}
	\centering
	\begin{tabular}{ |p{1.2cm}||p{1cm}||p{1.1cm}||p{1.3cm}||p{1cm}|}
		\hline
		\  &$R_T^s$,C & $R_T^s$,SC  & $R_T^d$,C& $R_T^d$,SC\\ 
		\hline\hline
		Alg. \ref{alg1-Ch4} & $O(\sqrt{T})$ & $O(\log T)$ & $O(\sqrt{C_TT})$ & $O(C_T)$\\ 
		\hline
		Alg. \ref{alg2-Ch4},\ref{alg4-Ch4} & $O(\sqrt{T})$ & $O(\log T)$ & $O(\sqrt{C_TT})$ & $O(C_{T,2})$\\ 
		\hline
		Alg. \ref{alg3-Ch4},\ref{alg5-Ch4} & $O(\sqrt{T})$ & $O(\log T)$ & $O(\sqrt{C_TT})$ & $O(C_{T,2})$\\ 
		\hline
		\color{black}Gradient Method\color{black} & $O(\sqrt{T})$\ \cite{zinkevich2003online} & $O(\log T)$\ \cite{hazan2007logarithmic} & $O(\sqrt{C_TT})$\ \cite{cao2021online}& $O(C_T)$\ \cite{shahrampour2018distributed}\ $O(C_{T,2})$\ \cite{chang2021online}\\ 
		\hline
	\end{tabular}
	\label{table1-Ch4}
\end{table}

The regret bounds summarized in Table~\ref{table1-Ch4} are consistent with regret bounds of full-gradient based online optimization algorithms proved in the existing literature \cite{Shwartz12Found,hazan2007logarithmic,cao2021online,mokhtari2016online} under similar settings. Our dynamic regret bounds for strongly convex functions proved in Theorems \ref{T7-Ch4} and \ref{T8-Ch4} might need multiple updates at each time $t$. This setup is also adopted in some existing works including \cite{zhang2017improved,chang2021online} to achieve less conservative regret bounds. It should also be noted that, although the online algorithms with different update rules share the regret bounds with the same order, the exact coefficient for the regret bounds may still be different.

\color{black}The rest of paper is organized as follows. The problem formulation is presented in Section~\ref{PF4}. The online coordinate descent algorithm considered in this paper is given in Section~\ref{OCD}. Regret bounds for random online coordinate descent algorithms are given in Section~\ref{RB-R} followed by regret bounds for deterministic online coordinate descent algorithms in Section~\ref{RB-D}. The numerical simulation is given in Section~\ref{NS4}. Finally the results presented in this paper is summarized in Section~\ref{Summary4}.

\textit{Notation}: Let $\mathbb{R}$ be the set of real numbers and $\mathbb{R}^{n}$ be the $n$-dimensional Euclidean space, $\mathbb{R}_{\geq 0}$ (resp. $\mathbb{R}_{>0}$) be the set of non-negative (resp. positive) real numbers\color{black}. For $x,y\in\mathbb{R}^n$, $\langle x,y\rangle$ denotes the inner product in $\mathbb{R}^n$\color{black}. The set of non-negative (resp. positive) integers is denoted by $\mathbb{Z}_{\geq 0}$ (resp. $\mathbb{Z}_{>0}$). Furthermore, $|\cdot|$ denotes the Euclidean norm of a vector $x\in \mathbb{R}^{n}$. The matrix $I_n$ is used to denote the $n$-dimensional identity matrix and $n$ will be omitted when the dimension is clear. For a given vector $x$, $x_{(i)}$ \color{black} denotes the $i$-th component of $x$, $x_i$ denotes the value of $x$ at time $i$, and $x_{(i),j}$ denotes the value of the $i$-th component of $x$ at time $j$. All random variables considered in this paper are defined on a probability space $(\Omega,\mathcal{F},\mathbb{P})$, where $\Omega$ is the sample space, the $\sigma$-field $\mathcal{F}$ is the set of events and $\mathbb{P}$ is the probability measure defined on $(\Omega,\mathcal{F})$\color{black}. 

\section{Problem formulation}
\label{PF4}
We consider the following optimization problem\color{black}
\begin{equation}\label{MainProblem}
	\underset{x\in\Theta}{\text{min}}\ f_t(x),
\end{equation}\color{black}
where $f_t:\mathbb{R}^n\rightarrow \mathbb{R}$ is the convex cost function at time $t$, $x\in\mathbb{R}^n$ is the decision variable\color{black}, and $\Theta\subseteq \mathbb{R}^n$ is the non-empty closed convex constraint set\color{black}. For simplicity, we assume $f_t$ is continuously differentiable for any $t\in\mathbb{Z}_{\geq 0}$. Moreover, let the decision variable $x_t$ at any given time $t$ be partitioned as $x_t=[x_{(1),t}^T,x_{(2),t}^T,\ldots,x_{(P),t}^T]$, $x_{(p),t}\in\mathbb{R}^{n_p}$, $\sum_{p=1}^{P}n_p=n$. The $P$ components of the vector \color{black} are \color{black} assigned to $P$ individual processors. For any integer \color{black} $p\in \{1,\ldots P\}$\color{black}, the processor $p$ is responsible for updating its own ``local'' decision variable \color{black} $x_{(p),t}$ \cite{BT}\color{black}. 
%\begin{remark}\label{CH4-R1}
	%Note that, as discussed in \cite{BT}, coordinate descent algorithms have the structure that naturally enables distributed and parallel implementation. However, these algorithms can also be used in a centralized fashion or even in a non-networked environment since \color{black} this \color{black} can potentially reduce the overall computational cost if the calculation of one component of the gradient is easy compared to the calculation of the full gradient.
%\end{remark}	

\color{black}Denote the minimizer of $f_t(x)$ at time $t$ by $x_t^*\in\Theta$\color{black}. We use a notion of regret to measure processors' ability to track $x_t^*$\color{black}. Regret refers to a quantity that measures the overall difference between the cost incurred by an algorithm and the cost at the best possible point from an offline view up to a horizon $T$\color{black}. Two notions of regret commonly considered in the literature are static regret and dynamic regret. The static regret is defined as:
\begin{equation}\label{S-static-R}\color{black}
\begin{split}
R_T^s:&=\sum_{t=1}^{T}f_t(x_t)-\underset{x}{\text{min}}\sum_{t=1}^{T}f_t(x)\\
      &=\sum_{t=1}^{T}f_t(x_t)-\sum_{t=1}^{T}f_t(x^*),
\end{split}
\end{equation}
where \color{black} $x^*:=\arg\underset{x\in\Theta}\min\sum_{t=1}^{T}f_t(x)$ for a given $T$\color{black}, the subscript $T$ is omitted for convenience\color{black}. On the other hand, the dynamic regret is defined as
\begin{equation}\label{S-dynamic-R}
	R_T^d:=\sum_{t=1}^{T}f_t(x_t)-\sum_{t=1}^{T}f_t(x_t^*).
\end{equation}
\color{black}
\begin{remark}
	\label{Regret-rem}
	Static regret is a useful performance metric in applications such as static parameter estimation, where the variable of interest is static. However, if the variable of interest evolves over time (e.g. tracking moving targets), the notion of dynamic regret makes more sense than its static counterpart. It can be seen from (\ref{S-static-R}) and (\ref{S-dynamic-R}) that regret captures the accumulation of errors due to the fact that optimization problems are not solved exactly at each step. If the regret is sublinear in $T$, then the average accumulated error $R_T^s/T$ (or $R_T^d/T$) will converge to $0$ as $T\rightarrow\infty$. This further implies that $x_t$ converges to $x^*$ (or $x_t^*$). Although being the more appropriate performance metric in most applications, dynamic regret does not have a provable bound sublinear in $T$ in general. To obtain a sublinear regret bound, additional regularity assumptions such as bounded variation of environment are typically required, see \cite{shahrampour2018distributed} for example.
\end{remark}
\color{black}If stochastic algorithms are considered, similar notions of regrets can be defined via expectations. 
\begin{equation}\label{SRR}
	R_T^s:=\sum_{t=1}^{T}\mathbb{E}[f_t(x_t)]-\underset{x}{\text{min}}\sum_{t=1}^{T}f_t(x).
\end{equation}
\begin{equation}\label{DRR}
	R_T^d:=\sum_{t=1}^{T}\mathbb{E}[f_t(x_t)]-\sum_{t=1}^{T}f_t(x_t^*).
\end{equation}
With some abuse of notation, $R_T^s$ and $R_T^d$ are used for both random and deterministic cases. However, this would not lead to confusion since it should be clear whether an algorithm is \color{black} stochastic \color{black} or not. 

\section{Online coordinate descent algorithms}\label{OCD}
We construct our online block coordinate descent algorithms following the setup in \cite{Wright}. At time $t$, we select a component of $x_t$ to update. The updating component at time $t$ is denoted by $i_t$\color{black}. If the $i$-th component is updating, then updating equation for component $i$ is given by
\begin{equation}
	x_{(i),t+1}=x_{(i),t}-\alpha_t[\nabla_{(i)} f_{t}(x_{t})],
\end{equation}
\color{black}where $\alpha_t$ is the stepsize at time $t$ and \color{black} $\nabla_{(i)} f_{t}(x_{t})$ \color{black} is the $i$-th component of the gradient of $f$ evaluated at $x_t$ at time $t$. For any $p\neq i_t$, we have
\begin{equation}
	x_{(p),t+1}=x_{(p),t}.
\end{equation}
\color{black}Then $x_{t+1}$ is projected onto $\Theta$\color{black}. We define the matrix $U_t=\text{diag}(U_{(1),t},U_{(2),t},\ldots,U_{(P),t})$. For any $t\in\mathbb{Z}_{\geq 0}$ and $1\leq p\leq P$, $U_{(p),t}\in\{I_{n_p},0_{n_p}\}$, where $I_{n_p}$ and $0_{n_p}$ denote identity and zero matrices of dimension $n_p$, respectively\color{black}. Then the updates of the coordinate descent algorithm at time $t$ can be written as 
\begin{equation}\label{coord}
	x_{t+1}=\Pi_{\Theta}(x_{t}-\alpha_tU_t[\nabla f_t(x_{t})]),
\end{equation}
where $\Pi_{\Theta}(\cdot)$ denotes projection on $\Theta$ which is well defined by closedness and convexity of $\Theta$. The following non-expansive property of the projection operator will be used extensively throughout the paper.
\begin{lemma}\cite[Proposition 3.2.1]{Bert1}
	\label{Proj_nonexp}
	Let $\Theta$ be a non-empty closed convex set. We have $|\Pi_{\Theta}(x)-\Pi_{\Theta}(y)|\leq|x-y|$, for any $x,y\in\mathbb{R}^n$.
\end{lemma}

\color{black}In this paper, we consider the following three commonly used schemes of selecting the updating component $i_t$.
\begin{itemize}
	\item The random coordinate descent algorithm. In this case, $i_t$ is selected randomly with equal probability, independently of the selections made at previous iterations.
	\item The cyclic coordinate descent algorithm. In this case, the updating component $i_t$ is selected in a pre-determined cyclic fashion: $i_{t+1}=(i_t\ \mathrm{mod}\ {P})+1$.
	\item The coordinate descent algorithm with Gauss-Southwell Rule. In this case, the updating component $i_t$ is selected as $i_t\in\arg \max_i|\nabla_{(i)} f_t(x_t)|$.
\end{itemize} 
\color{black}
In the paper, we consider the case where only one coordinate is allowed to change per iteration. The results on random coordinate descent can be extended to other cases where probabilities of selections are unequal with potentially overlapping components \cite{LSN}, such as the random sleep scheme \cite{yi2015stochastic}. Intuitively speaking, if for any $t$ the expectation of the update direction takes the form $\Gamma\nabla f_t(x)$ for a given positive definite diagonal matrix $\Gamma$, then the analysis in this work can be applied with mild modifications. Our analysis for deterministic cases can not be trivially extended to cover overlapping components. We aim to address this topic in future research.
\begin{remark}
	\label{Remark-compare}
	 A substantial review of variants of coordinate descent algorithms can be found in \cite[Section 6.5.1]{Bert1}. The cyclic selection of coordinates is normally assumed to ensure convergence of the algorithm. On the other hand, the use of an irregular order is then considered by researchers to accelerate convergence. Particularly, it is shown in \cite{Wright} that randomization leads to faster convergence in terms of expectation\color{black}. Obviously, this is not guaranteed for each instance of the algorithm. The Gauss-Southwell method leads to \color{black} faster convergence at the cost of extra computations and evaluations of gradients during the selection of coordinates which can be an issue in large-scale problems \cite{nutini2015coordinate}.
\end{remark}
\color{black}

\section{\color{black}Regret bounds for online coordinate descent algorithms with random coordinate selection rules}\label{RB-R}
The online random coordinate descent algorithm considered in this section is given in Algorithm \ref{alg1-Ch4}.
\begin{algorithm}
	\caption{Online random coordinate descent algorithm}\label{alg1-Ch4}
	\begin{algorithmic}[1]
		\State \textbf{Initialization}:  $x_0\in\Theta$.
		\State \textbf{Coordinate Selection}: At time $t$, $i_t=p$ with probability $\frac{1}{P}$ where $p\in\{1,2,\ldots,P\}$.
		\State \textbf{Update}:  For $i=i_t$:
		\begin{align*}
			x_{(i),t+1}\leftarrow x_{(i),t}-\alpha_t[\nabla_{(i)} f_{t}(x_{t})].
		\end{align*}
		For all $i\neq i_t$:
		\begin{equation*}
			x_{(i),t+1}\leftarrow x_{(i),t}.
		\end{equation*}
		\State \textbf{Projection}:  \color{black}For $x_{t+1}\leftarrow\Pi_{\Theta}(x_{t+1})$.\color{black}	
		\State Set $t\leftarrow t+1$ and go to Step 2.
	\end{algorithmic}
\end{algorithm}
\subsection{Static regret for convex functions}
Before we state the main results, we first list the assumptions.
\begin{assumption}\label{a1-Ch4}
	For any given $t\in\mathbb{Z}_{>0}$, $x\in\Theta$ and \color{black} $x^*:=\arg\underset{x\in\Theta}\min\sum_{t=1}^{T}f_t(x)$\color{black}, the following inequalities hold uniformly in $t$ \color{black} and $T$ \color{black} for problem (\ref{MainProblem}),
	\begin{enumerate}[(i)]
		\item $|\nabla f_t(x)|\leq G$,
		\item $|x_t-x^*|\leq R$,  
	\end{enumerate} 
\color{black}where $x_t$ is the decision variable at time $t$.
\end{assumption}
\color{black}
\begin{remark}\label{CH4-R2}
	Item (ii) of Assumption \ref{a1-Ch4} can be ensured if the constraint set $\Theta$ is bounded. Moreover, when $\Theta$ is bounded, item (i) of Assumption \ref{a1-Ch4} holds in many cases including linear regression and logistic regression \cite{cao2022decentralized}. By \cite[Lemma 2.6]{Shwartz12Found}, Assumption \ref{a1-Ch4} (i) implies that $f_t$ is also Lipschitz continuous uniformly in $t$ over $\Theta$. These assumptions are quite standard in the literature on online optimization \cite{cao2019online,cao2021online,cao2022decentralized,li2022survey,yi2023regret,yuan2022distributed,lesage2020predictive,li2021online,shahrampour2018distributed}. 
\end{remark}

\color{black}Before we state the first main result of the paper, we first present the so-called doubling trick scheme stepsize rule which is introduced in \color{black}\cite[Section 2.3.1]{Shwartz12Found}.\color{black} 
\begin{definition}[Doubling trick scheme]\label{DTS}
	A stepsize sequence $\{\alpha_t\}$ is said to be chosen by the doubling trick scheme, if for $q=0,1,\ldots,\lceil \log_2 T\rceil+1$, $\alpha_t=\frac{1}{\sqrt{2^q}}$ in each period of $2^q$ iterations $t=2^q, 2^q+1,\ldots,2^{q+1}-1$.
\end{definition}
\color{black}
\begin{remark}\label{doubling}
	The doubling trick scheme of stepsize choices is particularly useful when stepsizes of type $\sqrt{1/T}$ are needed to achieve a desirable regret bound. Since in applications it is often unrealistic to know the horizon $T$ in advance, by using the doubling trick scheme, regret bounds of the same order can be achieved without \color{black} explicit \color{black} knowledge of $T$. The same trick will be used extensively throughout the paper to derive regret bounds. It should also be noted that, the doubling trick scheme in general does not result a better regret bound in terms orders compared to other stepsize rules \cite{zinkevich2003online}.
\end{remark}
\color{black}
The following result states that, under Assumption \ref{a1-Ch4}, if the stepsize at each iteration is chosen by the doubling trick scheme, there is an upper bound for the static regret defined in (\ref{SRR}). Moreover, the upper bound has the order of $O(\sqrt{T})$ for convex costs.
\begin{thm}\label{T1-Ch4}
	Suppose Assumption \ref{a1-Ch4} holds. Furthermore, if the stepsize is chosen according to Definition \ref{DTS}. Then, the static regret (\ref{SRR}) achieved by Algorithm \ref{alg1-Ch4} satisfies
	\begin{equation}
		R_T^s\leq (B_1+B_2)\sqrt{T},
	\end{equation} 
	where $B_1=\frac{PR^2}{2}$ and $B_2=\frac{\sqrt{2}G^2}{2(\sqrt{2}-1)}$.
\end{thm}\color{black}
\begin{pf}
	\color{black}From (\ref{coord}) and Lemma \ref{Proj_nonexp}, we have 
	\begin{equation*}
		\begin{split}
			&|x_{t+1}-x|^2\leq|x_{t}-\alpha_tU_t[\nabla f_t(x_{t})]-x|^2=|x_t-x|^2\\
			&-2\alpha_t[U_t\nabla f_t(x_{t})]^T(x_t-x)+\alpha_t^2|U_t\nabla f_t(x_{t})|^2
		\end{split}
	\end{equation*}
	for any $x\in\Theta$\color{black}. Given $x_t$, denote the $\sigma$-field containing past data of Algorithm \ref{alg1-Ch4} up to time $t$ by $\mathcal{F}_t$\color{black}. Then, by convexity of $f_t$, we have
	\begin{equation}\label{coreinq}
		\begin{split}
			&\mathbb{E}[|x_{t+1}-x|^2|\mathcal{F}_t]\leq\\
			&|x_t-x|^2-\frac{2\alpha_t}{P}[\nabla f_t(x_{t})]^T(x_t-x)+\frac{\alpha_t^2}{P}|\nabla f_t(x_{t})|^2\\
			&\leq |x_t-x|^2-\frac{2\alpha_t}{P}(f_t(x_t)-f_t(x))+\frac{\alpha_t^2}{P}|\nabla f_t(x_{t})|^2.
		\end{split}
	\end{equation}
	Substituting \color{black} $x^*:=\arg\underset{x\in\Theta}\min\sum_{t=1}^{T}f_t(x)$ \color{black} in to (\ref{coreinq})\color{black}, taking total expectations \color{black} using  $\mathbb{E}[\mathbb{E}[x|\mathcal{F}_t]|\mathcal{F}_{t-1}] = \mathbb{E}[x|\mathcal{F}_{t-1}]$ for $\mathcal{F}_{t-1}\subset \mathcal{F}_t$\color{black}, and rearranging (\ref{coreinq}) leads to
	\begin{equation}
		\begin{split}
			&\mathbb{E}[f_t(x_t)-f_t(x^*)]\\
			&\leq \frac{P}{2\alpha_t}(\mathbb{E}[|x_t-x^*|^2]-\mathbb{E}[|x_{t+1}-x^*|^2])+\frac{\alpha_t}{2}|\nabla f_t(x_{t})|^2.
		\end{split}
	\end{equation}
	\color{black}Note that
	\begin{equation}\label{ineq2}
		\begin{split}
			R_T^s\leq&\sum_{t=1}^T\frac{P}{2\alpha_t}(\mathbb{E}[|x_t-x^*|^2]-\mathbb{E}[|x_{t+1}-x^*|^2])\\
			&+\sum_{t=1}^T\frac{\alpha_t}{2}|\nabla f_t(x_{t})|^2.
		\end{split}
	\end{equation}
	Due to Assumption \ref{a1-Ch4} the gradient of $f_t$ is uniformly bounded by $G$ and $|x_t-x^*|$ is upper bounded uniformly by $R$. Consequently\color{black},
	\begin{equation}\label{e4.13}
		\begin{split}
			&\sum_{t=1}^T\frac{P}{2\alpha_t}(\mathbb{E}[|x_t-x^*|^2]-\mathbb{E}[|x_{t+1}-x^*|^2])=\\
			&\frac{P}{2\alpha_1}\mathbb{E}[|x_1-x^*|^2]-\frac{P}{2\alpha_T}\mathbb{E}[|x_{T+1}-x^*|^2]\\
			&+\sum_{t=2}^T(\frac{P}{2\alpha_t}-\frac{P}{2\alpha_{t-1}})\mathbb{E}[|x_t-x^*|^2]\\
			&\leq \frac{PR^2}{2\alpha_1}+R^2\sum_{t=2}^T(\frac{P}{2\alpha_t}-\frac{P}{2\alpha_{t-1}})\\
			&=\frac{PR^2}{2\alpha_T}\leq \frac{PR^2}{2}\sqrt{T}:=B_1\sqrt{T}.
		\end{split}
	\end{equation}
    \color{black}In the last inequality of (\ref{e4.13}), the property $\alpha_T\geq 1/\sqrt{T}$ is used, which is a direct consequence of the definition of the stepsize rule\color{black}. On the other hand, the remaining term in (\ref{ineq2}) can be upper bounded as follows
	\begin{equation*}
		\sum_{t=1}^T\frac{\alpha_t}{2}|\nabla f_t(x_{t})|^2\leq \sum_{t=1}^T\alpha_t\frac{G^2}{2}.
	\end{equation*}
	To derive the regret bound\color{black}, we again use the properties of the doubling trick scheme\color{black}. First, we set the horizon to be some known constant $T^*$ and the stepsize is chosen as the constant $\frac{1}{\sqrt{T^*}}$. Then it is obvious that\color{black}
	\begin{equation}\label{B2}
		\sum_{t=1}^{T^{*}}\frac{\alpha_t}{2}|\nabla f_t(x_{t})|^2\leq \sum_{t=1}^{T^{*}}\alpha_t\frac{G^2}{2}\leq \frac{G^2}{2}\sqrt{T^*}.
	\end{equation}
	\color{black}Since for $q=0,1,\ldots,\lceil \log_2 T\rceil+1$, $\alpha_t=\frac{1}{\sqrt{2^q}}$ in each period of $2^q$ iterations $t=2^q, 2^q+1,\ldots,2^{q+1}-1$. That is $T^*=2^q$. Then we can sum (\ref{B2}) up over any given $T$ as
	\begin{equation}
		\begin{split}
			\sum_{t=1}^T\frac{\alpha_t}{2}|\nabla f_t(x_{t})|^2&\leq \sum_{q=0}^{\lceil \log_2 T\rceil}\sqrt{2^q}\frac{G^2}{2}\\
			&=\frac{G^2}{2}\frac{1-{\sqrt{2}}^{\lceil \log_2 T\rceil+1}}{1-\sqrt{2}}\\
			&\leq \frac{G^2}{2}\frac{1-{\sqrt{2T}}}{1-\sqrt{2}}\\
			&\leq\frac{\sqrt{2}G^2}{2(\sqrt{2}-1)}\sqrt{T}:=B_2\sqrt{T}.
		\end{split}
	\end{equation}
	Thus, the proof is complete. \hfill $\qed$
\end{pf}
\begin{remark}\label{subgradient}
	In Theorem \ref{T1-Ch4}, the differentiability of $f_t$ is in fact never used. Thus the results apply to the case where subgradients are used when $f_t$ is not differentiable for some $t$ as long as item 1 of Assumption \ref{a1-Ch4} is satisfied by the subgradients used in the iterations\color{black}. Moreover, the regret bound established in Theorem \ref{T1-Ch4} is of the same order as the one established in \cite{Shwartz12Found} for online gradient descent under the same assumptions.\color{black}  
\end{remark}

\subsection{Static regret for strongly convex functions}
In this part, we show that if the cost functions are strongly convex, then we can achieve an improved static regret bound for the online random coordinate descent algorithms. 
\begin{assumption}\label{a2-Ch4}
	For any $t\in\mathbb{Z}_{\geq 0}$, the function $f_t$ is uniformly $\mu$-strongly convex, i.e., there exists $\mu>0$ such that the following inequality holds for any $t\in\mathbb{Z}_{\geq 0}$, $x,y\in\mathbb{R}^n$
	\begin{equation}\label{StrC}
		f_t(y)\geq f_t(x)+\nabla f_t(x)^T(y-x)+\frac{\mu}{2}|y-x|^2.
	\end{equation} 	
\end{assumption}
\begin{thm}\label{T2-Ch4}
	Suppose \color{black} Assumptions \ref{a1-Ch4} (i) \color{black} and \ref{a2-Ch4} hold. Furthermore, if the stepsize is chosen as $\alpha_t=\frac{P}{\mu t}$. Then, the static regret (\ref{SRR}) achieved by Algorithm \ref{alg1-Ch4} satisfies
	\begin{equation}
		R_T^s\leq \frac{PG^2}{2\mu}(1+\log T).
	\end{equation} 
\end{thm}
\begin{pf}
	Similar to (\ref{coreinq}), given $x_t$, we have the following relationship for any $x\in\Theta$,
	\begin{equation}\label{coreinq2}
		\begin{split}
			&\mathbb{E}[|x_{t+1}-x|^2|\mathcal{F}_t]\leq\\
			&|x_t-x|^2-\frac{2\alpha_t}{P}[\nabla f_t(x_{t})]^T(x_t-x)+\frac{\alpha_t^2}{P}|\nabla f_t(x_{t})|^2\\
			&\leq |x_t-x|^2-\frac{2\alpha_t}{P}(f_t(x_t)-f_t(x))-\frac{\alpha_t\mu}{P}|x_t-x|^2\\
			&+\frac{\alpha_t^2}{P^2}|\nabla f_t(x_{t})|^2=(1-\frac{\alpha_t\mu}{P})|x_t-x|^2\\
			&-\frac{2\alpha_t}{P}(f_t(x_t)-f_t(x))+\frac{\alpha_t^2}{P}|\nabla f_t(x_{t})|^2.
		\end{split}
	\end{equation}
	\color{black}By substituting $x^*$ to (\ref{coreinq2}) and rearranging the terms, we have the following inequality\color{black}
	\begin{equation*}
		\begin{split}
			\mathbb{E}[f_t(x_t)-f_t(x^*)]\leq&\frac{P}{2\alpha_t}(\mathbb{E}[|x_t-x^*|^2]-\mathbb{E}[|x_{t+1}-x^*|^2])\\
			&-\frac{\mu}{2}\mathbb{E}[|x_t-x^*|^2]+\frac{\alpha_t}{2}|\nabla f_t(x_{t})|^2.
		\end{split}
	\end{equation*}
	Since the gradient of $f_t$ is uniformly bounded by $G$, the following inequality holds
	\begin{equation}\label{scsr}
		\begin{split}
			R_T^s\leq& \sum_{t=1}^T\left[\frac{P}{2\alpha_t}(\mathbb{E}[|x_t-x^*|^2]-\mathbb{E}[|x_{t+1}-x^*|^2])\right]\\
			&-\sum_{t=1}^T\frac{\mu }{2}\mathbb{E}[|x_t-x^*|^2]+\sum_{t=1}^T\frac{\alpha_t}{2}|\nabla f_t(x_{t})|^2.
		\end{split}
	\end{equation}

	By choosing $\alpha_t=\frac{P}{\mu t}$, we have the following relationship
	\begin{equation}\label{scsr2}
		\begin{split}
			&R_T^s\leq0+\sum_{t=2}^T(\frac{P}{2\alpha_t}-\frac{P}{2\alpha_{t-1}}-\frac{\mu}{2})\mathbb{E}[|x_t-x^*|^2]\\
			&-\frac{\mu T}{2}\mathbb{E}[|x_{T+1}-x^*|^2]+\sum_{t=1}^T\frac{\alpha_tG^2}{2}\\
			&\leq 0+\sum_{t=1}^T\frac{\alpha_tG^2}{2}\leq \frac{PG^2}{2\mu}(1+\log T),
		\end{split}
	\end{equation}
	\color{black}where the second inequality follows by expanding the sum. Thus, the proof is complete. \hfill $\qed$
\end{pf}

By making the extra assumption of strong convexity, the sublinear static regret bound of order $O(\sqrt{T})$ established in Theorem \ref{T1-Ch4} is improved to be of order $O(\log T)$ which is consistent with the regret bound for online gradient descent algorithms under the same assumptions established in \cite{hazan2007logarithmic}.

\subsection{Dynamic regret for convex functions}
Now, we provide an upper bound of the dynamic regret of the online coordinate descent algorithm. First, we define the following measure of variations of the problem (\ref{MainProblem}) which is commonly used in the literature \cite{mokhtari2016online,shahrampour2018distributed}
\begin{equation}\label{measureCT}
	C_T:=\sum_{t=1}^T|x_t^*-x_{t-1}^*|,
\end{equation}
where $x_t^*$ is a solution to the optimization problem at time $t$ and $x_0^*$ can be arbitrary constant. The term $C_T$ captures the accumulated variations of optimal points at two consecutive time instances over time. If the cost functions are general convex functions and an upper bound of $C_T$ is known, the following results can be stated.
\begin{thm}\label{T3-Ch4}
	Suppose Assumption \ref{a1-Ch4} holds. Furthermore, if the stepsize is chosen as $\alpha_t=\sqrt{\frac{C_T}{T}}$, the dynamic regret (\ref{DRR}) achieved by Algorithm \ref{alg1-Ch4} satisfies
	\begin{equation*}
		R_T^d\leq (\frac{5R^2}{2\sqrt{C_T}}+RP\sqrt{C_T})\sqrt{T}+\frac{\sqrt{C_T}G^2}{2}\sqrt{T}.
	\end{equation*} 
\end{thm}

\begin{pf}
\color{black}Substituting $x_t^*$ in (\ref{coreinq}) for any $t\geq 2$\color{black}, yields
\begin{equation*}\label{DRcoreIP1}
	\begin{split}
		\mathbb{E}[f_t(x_t)-f_t(x_t^*)]\leq& \frac{P}{2\alpha_t}(\mathbb{E}[|x_t-x_t^*|^2]-\mathbb{E}[|x_{t+1}-x_t^*|^2])\\
		&+\frac{\alpha_t}{2}|\nabla f_t(x_{t})|^2.
	\end{split}
\end{equation*}
\color{black}Furthermore\color{black},
\begin{equation}\label{DRcoreIP2}
	\begin{split}
		&\sum_{t=1}^T(|x_t-x_t^*|^2-|x_{t+1}-x_t^*|^2)\leq\\
		&|x_1-x_1^*|^2+\sum_{t=2}^T(|x_t-x_t^*|^2-|x_{t}-x_{t-1}^*|^2)\\
		&=|x_1-x_1^*|^2+\sum_{t=2}^T2x_t^T(x_{t-1}^*-x_t^*)+\sum_{t=2}^T(|x_t^*|^2-|x_{t-1}^*|^2)\\
		&\leq |x_1-x_1^*|^2+|x_T^*|^2+2\sum_{t=2}^T2|x_t||x_{t-1}^*-x_t^*|\\
		&\leq 5R^2+2RC_T.\\
	\end{split}
\end{equation}
Then, 
\begin{equation*}\label{DRcoreIP3}
	R_T^d\leq \frac{P}{2\alpha_t}(5R^2+2RC_T)+\frac{\alpha_t}{2}G^2T.
\end{equation*}
\color{black}Choosing the stepsize to be $\sqrt{\frac{C_T}{T}}$ leads to\color{black}
\begin{equation*}\label{DRcoreIP4}
	R_T^d\leq (\frac{5R^2}{2\sqrt{C_T}}+RP\sqrt{C_T})\sqrt{T}+\frac{\sqrt{C_T}G^2}{2}\sqrt{T}.
\end{equation*} \hfill $\qed$
\end{pf}

\begin{remark}\label{DR-Conv}
	As discussed in Remark \ref{doubling}, the choice of stepsize $\sqrt{\frac{C_T}{T}}$ can be replaced by the doubling trick scheme (see Definition \ref{DTS}) to derive a regret bound of the same order. Moreover, the exact value of $C_T$ is also not necessary to derive the regret bound given in Theorem \ref{T3-Ch4}\color{black}. If $C_T$ is not available, a stepsize of the same order of $\sqrt{\frac{C_T}{T}}$ can be used with constant factor errors. We refer readers to \cite[Theorem 1]{cao2021online} for a more detailed discussion on how to implement stepsizes of the form $\sqrt{\frac{C_T}{T}}$ and obtain an upper bound of $C_T$ in practice using limited information. The inclusion of $C_T$ in the dynamic regret bound is common in existing literature \cite{cao2021online,shahrampour2018distributed,yi2023regret}. As argued in \cite{cao2021online}, if $C_T$ is sublinear in $T$, then the overall dynamic regret bound in Theorem \ref{T3-Ch4} will be sublinear in $T$, which is desired and consistent with the dynamic regret bounds established in \cite{cao2021online} for full-gradient based online algorithms. To see this, if the variation of minimizers decreases with the order ${1}/{t}$, then $C_T=O(\log T)$ and $R_T^d\leq O(\sqrt{T\log T})$. Similarly, if the variation of minimizers decreases with the order ${1}/{t^q}$ with $0<q<1$, then $C_T=O(T^{1-q})$ and $R_T^d\leq O(T^{1-\frac{q}{2}})$. In the worst case, by Assumption \ref{a1-Ch4}, we have $C_T\leq RT$ and $R_T^d\leq O(\sqrt{R}T)$. Thus, ${R_T^d}/{T}\leq O(\sqrt{R})$, which means the corresponding online algorithm incurs a steady-state tracking error. Moreover, the error decreases with the constant bound on the variation of minimizers.\color{black}
\end{remark}

\subsection{Dynamic regret for strongly convex functions}
Now, we consider the dynamic regret of the online random coordinate descent algorithm for strongly convex functions. As before, we consider $\mu$-strongly convex functions\color{black}. In addition, we will make the following assumption commonly seen in online optimization literature \cite{li2021online,mokhtari2016online}.
\begin{assumption}\label{LipGrad}
	For any $t\in\mathbb{Z}_{\geq 0}$, the gradient of $f_t$ is uniformly Lipschitz continuous, i.e., there exists $L>0$ such that the following inequality holds for any $t\in\mathbb{Z}_{\geq 0}$, $x,y\in\mathbb{R}^n$
	\begin{equation}\label{LipG}
		|\nabla f_t(x)-\nabla f_t(y)|\leq L|x-y|.
	\end{equation}	
\end{assumption}
It is shown in \cite[Proposition 6.1.2]{Bert1} that under the same conditions stated in Assumption \ref{LipGrad}, \eqref{LipG} is equivalent to 
\begin{equation}
	\label{LipG2}
	f_t(y)\leq f_t(x)+\nabla f_t(x)^T(y-x)+\frac{L}{2}|y-x|^2.
\end{equation}
\color{black}

The main result regarding the dynamic regret bound of Algorithm \ref{alg1-Ch4} for strongly convex functions with Lipschitz gradients is stated as follows.
\begin{thm}\label{T4-Ch4}
	Suppose $\nabla f_t(x_t^*)=0$ for any $t\in\mathbb{Z}_{\geq0}$, \color{black} Assumption \ref{a1-Ch4} (i) and Assumptions \ref{a2-Ch4}-\ref{LipGrad} hold\color{black}. If the stepsize is chosen as $\alpha_t=\alpha\leq\frac{2}{\mu+L}$, the dynamic regret (\ref{DRR}) achieved by Algorithm \ref{alg1-Ch4} satisfies
	\begin{equation*}
	R_T^d\leq \frac{G}{e}(C_T+C_1),
	\end{equation*} 
	where $e=1-\sqrt{1-\frac{2\alpha}{P}\frac{\mu L}{\mu+L}}$, $C_T=\sum_{t=1}^T|x_t^*-x_{t-1}^*|$, and $C_1=|x_1-x_1^*|-|x_1^*-x_0^*|$.
\end{thm}
\begin{pf}
	From (\ref{StrC}) and (\ref{LipG}), the following inequality holds \cite[Proposition 6.1.9]{Bert1} for any $t\in\mathbb{Z}_{\geq0}, x,y\in\mathbb{R}^n$
	\begin{equation}
		\label{p619}
	\begin{split}	
	&(\nabla f_t(x)-\nabla f_t(y))^T(x-y)\geq \\
	&\frac{\mu L}{\mu+L}|x-y|^2+\frac{1}{\mu+L}|\nabla f_t(x)-\nabla f_t(y)|^2.
    \end{split}
	\end{equation} 
    %\color{black}Moreover, \cite[Proposition 6.1.1]{Bert1} implies that for any $t\in\mathbb{Z}_{\geq0},\alpha>0$ and $P\in\mathbb{Z}_{>0}$, we have
    %\begin{equation}\label{p611}
    %	x_t^*=\Pi_{\Theta}(x_t^*-\frac{\alpha}{P}\nabla f_t(x_t^*)).
    %\end{equation}
    %To see this, suppose $\Pi_{\Theta}(x_t^*-\frac{\alpha}{P}\nabla f_t(x_t^*))=z_t\neq x_t^*$, by the optimality condition of constrained convex problems \cite[Proposition 1.1.8, Appendix B]{Bert1}, we must have $\nabla f_t(x_t^*)^T(x-x_t^*)\geq 0,\forall x\in\Theta$. By item (a) of \cite[Proposition 6.1.1]{Bert1}, we have $\nabla f_t(x_t^*)^T(z_t-x_t^*)\leq -\frac{P}{\alpha}|z_t-x_t^*|^2<0,\forall \alpha>0$ and $P\in\mathbb{Z}_{>0}$. This contradicts $z_t\neq x_t^*$ since $z_t\in\Theta$.
    
	Then, using Lemma \ref{Proj_nonexp}, we have  $|x_{t+1}-x_t^*|^2\leq|x_{t}-\alpha U_t\nabla f_t(x_t)-x_t^*|^2$. Next, we take the conditional expectation given past history up to time $t$,
	\begin{equation}
		\label{EXP-priorexpan}
		\begin{split}
			&\mathbb{E}[|x_{t+1}-x_t^*|^2|\mathcal{F}_t]\leq-\frac{2\alpha}{P}(\nabla f_t(x_t)-\nabla f_t(x_t^*))^T(x_t-x_t^*)\\
			&+|x_{t}-x_t^*|^2+\alpha^2\mathbb{E}[|U_t\nabla f_t(x_t)|^2|\mathcal{F}_t].
		\end{split}
	\end{equation}
	The last term in \eqref{EXP-priorexpan} can be upper bounded using the following inequality 
	\begin{equation}
		\label{EXP-expan_last}
			\mathbb{E}[|U_t\nabla f_t(x_t)|^2|\mathcal{F}_t]{\leq}\frac{1}{P}|\nabla f_t(x_t)-\nabla f_t(x_t^*)|^2.
	\end{equation}
	The inequality \eqref{EXP-expan_last} comes from the fact that 
	\begin{equation*}
		\mathbb{E}[|U_t\nabla f_t(x_t)|^2|\mathcal{F}_t]=|\nabla f_t(x_t)|^2/P.
	\end{equation*}
	Combining \eqref{p619}, \eqref{EXP-priorexpan} and \eqref{EXP-expan_last}, we have for any $t\in\mathbb{Z}_{\geq0}$
	\begin{equation}\label{IneL10}
	\begin{split}
	&\mathbb{E}[|x_{t+1}-x_t^*|^2|\mathcal{F}_t]\leq(1-\frac{2\alpha}{P}\frac{\mu L}{\mu+L})|x_{t}-x_t^*|^2\\
	&+(\frac{\alpha^2}{P}-\frac{2\alpha}{P}\frac{1}{\mu+L})|\nabla f_t(x_t)-\nabla f_t(x_t^*)|^2\\
	&{\leq}(1-\frac{2\alpha}{P}\frac{\mu L}{\mu+L})|x_{t}-x_t^*|^2,
	\end{split}
	\end{equation}
    where the last inequality comes from the constant stepsize choice $\alpha\leq 2/(\mu+L)$. By Jensen's inequality, we have 
	\begin{equation}\label{compa}
	\mathbb{E}|x_{t+1}-x^*_t|\mathcal{F}_t|\leq \sqrt{1-\frac{2\alpha}{P}\frac{\mu L}{\mu+L}}|x_t-x_t^*|.
	\end{equation}
	\color{black}Note \color{black} that $\sqrt{1-\frac{2\alpha}{P}\frac{\mu L}{\mu+L}}<1$. Thus, there exists $0<e<1$ such that $\sqrt{1-\frac{2\alpha}{P}\frac{\mu L}{\mu+L}}=1-e$. 
	
	Assumption \ref{a1-Ch4} (i) implies $|f_t(x)-f_t(y)|\leq G|x-y|$ for any $t\in\mathbb{Z}_{\geq0}$ and any $x,y\in\Theta$. As a result, the stochastic dynamic regret is bounded as $R_T^d\leq G\mathbb{E}[\sum_{t=1}^{T}|x_t-x_t^*|]$. Moreover, we have
	\begin{equation}\label{compa2}
	|x_{t+1}-x_{t+1}^*|\leq|x_{t+1}-x_t^*|+|x_{t}^*-x_{t+1}^*|.
	\end{equation}
	Now taking total expectations, using (\ref{compa}) and summing up both sides of (\ref{compa2}) from $t=1$ to $t=T-1$ yields
	\begin{equation}\label{2ndtolast}
	R_T^d/G\leq |x_1-x_1^*|+(1-e)R_T^d/G+\sum_{t=1}^{T}|x_{t}^*-x_{t+1}^*|.
	\end{equation}
	Rearranging (\ref{2ndtolast}) leads to 
	\begin{equation*}
	R_T^d\leq \frac{G}{e}(C_T+C_1).
	\end{equation*}\hfill $\qed$

\end{pf}

\begin{remark}\label{R-T-4.4}
	By assuming uniform strong convexity and uniform Lipschitz continuity of gradients of $f_t$, Theorem \ref{T4-Ch4} improves the order of the theoretical dynamic regret bound from $O(\sqrt{C_TT})$ in Theorem \ref{T3-Ch4} to $O(C_T)$, which is consistent with results shown in \cite{mokhtari2016online} for full-gradient based online algorithms. This means if problem (\ref{MainProblem}) does not change over time i.e. $f_t=f$ for any $t$, $C_T=0$ and the regret grows at a rate $O(1)$ which is consistent with the convergence result of coordinate descent algorithms in the offline setting \cite{Wright}\color{black}. As item (ii) of Assumption \ref{a1-Ch4} is not required for proving Theorem \ref{T4-Ch4}, the regret bound is applicable to unconstrained problems with bounded gradients.
\end{remark}
\section{\color{black}Regret bounds for online coordinate descent algorithms with deterministic coordinate selection rules\color{black}}\label{RB-D}
We consider two deterministic online coordinate descent algorithms in this section and they are given in Algorithms \ref{alg2-Ch4} and \ref{alg3-Ch4}, respectively.
\begin{algorithm}
	\caption{Online cyclic coordinate descent algorithm}\label{alg2-Ch4}
	\begin{algorithmic}[1]
		\State \textbf{Initialization}:  $x_0, i_0$.
		\State \textbf{Select Coordinate}:  $i_t\leftarrow(i_{t-1}\mod P)+1$.
		\State \textbf{Update}:  For $i=i_t$:
		\begin{align*}
		x_{(i),t+1}\leftarrow x_{(i),t}-\alpha_t[\nabla_{(i)} f_{t}(x_{t})].
		\end{align*}
		For all $i\neq i_t$:
		\begin{equation*}
		x_{(i),t+1}\leftarrow x_{(i),t}.
		\end{equation*}
	\State \textbf{Projection}:  \color{black} $x_{t+1}\leftarrow\Pi_{\Theta}(x_{t+1})$.\color{black}	
		\State Set $t\leftarrow t+1$ and go to Step 2.
	\end{algorithmic}
\end{algorithm}

\begin{algorithm}
	\caption{Online coordinate descent algorithm with Gauss-Southwell Rule}\label{alg3-Ch4}
	\begin{algorithmic}[1]
		\State \textbf{Initialization}:  $x_0\in\Theta$.
		\State \textbf{Select Coordinate}: $i_t\leftarrow\arg \max_i|\nabla_{(i)} f_t(x_t)|$.
		\State \textbf{Update}:  For $i=i_t$:
		\begin{align*}
		x_{(i),t+1}\leftarrow x_{(i),t}-\alpha_t[\nabla_{(i)} f_{t}(x_{t})].
		\end{align*}
		For all $i\neq i_t$:
		\begin{equation*}
		x_{(i),t+1}\leftarrow x_{(i),t}.
		\end{equation*}
	\State \textbf{Projection}:  \color{black}$x_{t+1}\leftarrow\Pi_{\Theta}(x_{t+1})$.\color{black}	
		\State Set $t\leftarrow t+1$ and go to Step 2.
	\end{algorithmic}
\end{algorithm}

Note that both Algorithms \ref{alg2-Ch4} and \ref{alg3-Ch4} are deterministic and update a component of the decision variable. We can therefore relate the regrets achievable by Algorithms \ref{alg2-Ch4} and \ref{alg3-Ch4} to regret achievable by the online projected gradient descent algorithm which takes the form
\begin{equation}\label{OGD}
\color{black}x_{t+1}=\Pi_{\Theta}(x_{t}-\alpha_t\nabla f_t(x_{t})).\color{black}
\end{equation}
It can be easily seen that Algorithms \ref{alg2-Ch4} and \ref{alg3-Ch4} take the following form\color{black} 
\begin{equation}\label{OGD-e}
x_{t+1}=\Pi_{\Theta}(x_{t}-\alpha_t(\nabla f_t(x_{t})+\nu_t)),
\end{equation}
\color{black}where the vector $\nu_t:=\overline{\nabla}_{(i_t)} f_t(x_{t})-\nabla f_t(x_{t})$ with $\overline{\nabla}_{(i_t)} f_t(x_{t})\in\mathbb{R}^n$ denotes the vector such that its $i_t$-th component is ${\nabla}_{(i_t)} f_t(x_{t})$ while all other entries are $0$\color{black}. It captures the effect of the components of the gradient that are not updating. We use $\bar{R}_T^s$ and $\bar{R}_T^d$ to denote the static and dynamic regrets of the online projected gradient descent algorithm respectively. Then, we can have the following result.
\begin{proposition}\label{P1-Ch4}
	Suppose Assumption \ref{a1-Ch4} holds, then the static regret $R_T^s$ and dynamic regret $R_T^d$ of \color{black} iterations \color{black} (\ref{OGD-e}) satisfy the following two relationships.
	\begin{enumerate}
		\item $R_T^s\leq \bar{R}_T^s+G^2 \sum_{t=1}^{T}\alpha_t$.
		\item $R_T^d\leq \bar{R}_T^d+G^2 \sum_{t=1}^{T}\alpha_t$.
	\end{enumerate}
\end{proposition}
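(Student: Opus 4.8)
The plan is to treat recursion (\ref{OGD-e}) as an online (sub)gradient descent step with an \emph{inexact} gradient, and then to rerun, almost verbatim, the potential-function arguments already used in the proofs of Theorems~\ref{T1-Ch4} and~\ref{T3-Ch4}, carefully tracking the extra contribution of the perturbation $\nu_t$. I would first record the elementary structure of $\nu_t$. Writing $\tilde g_t:=\nabla f_t(x_t)+\nu_t=\overline{\nabla}_{(i_t)} f_t(x_t)$ for the actual search direction, the vectors $\nu_t$ and $\tilde g_t$ have disjoint block supports (the $i_t$-th block and its complement), so $\langle\nu_t,\tilde g_t\rangle=0$ and, by Assumption~\ref{a1-Ch4}(a), $|\tilde g_t|^2+|\nu_t|^2=|\nabla f_t(x_t)|^2\le G^2$; in particular $|\tilde g_t|\le G$ and $|\nu_t|\le G$. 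A consequence I would use repeatedly is $\langle\nu_t,x_t-x_{t+1}\rangle=\alpha_t\langle\nu_t,\tilde g_t\rangle=0$.

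For item~1, I would expand $|x_{t+1}-x^*|^2=|x_t-\alpha_t\tilde g_t-x^*|^2$ exactly as in (\ref{coreinq}) and rearrange to isolate $\langle\tilde g_t,x_t-x^*\rangle$; then use convexity in the form $f_t(x_t)-f_t(x^*)\le\langle\nabla f_t(x_t),x_t-x^*\rangle=\langle\tilde g_t,x_t-x^*\rangle-\langle\nu_t,x_t-x^*\rangle$. Summing over $t$ and invoking Assumption~\ref{a1-Ch4}(b), the terms coming from $\langle\tilde g_t,x_t-x^*\rangle$ together with the telescoping of $|x_t-x^*|^2$ reproduce, term by term, exactly the estimate that bounds the static regret $\bar R_T^s$ of the plain recursion (\ref{OGD}) under the same assumptions and step sizes — the computation is the one carried out in (\ref{ineq2})--(\ref{e4.13}), now with $\tilde g_t$ in the role of $\nabla f_t(x_t)$ and using only $|\tilde g_t|\le G$. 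What remains is the perturbation sum $-\sum_{t=1}^T\langle\nu_t,x_t-x^*\rangle$, and item~1 amounts precisely to the claim that this sum is at most $G^2\sum_{t=1}^T\alpha_t$. To obtain this I would first replace $\langle\nu_t,x_t-x^*\rangle$ by $\langle\nu_t,x_{t+1}-x^*\rangle$ using $\langle\nu_t,x_t-x_{t+1}\rangle=0$, and then bound the perturbation by pairing $\nu_t$ with increments of the trajectory rather than with the (only boundedly close) comparator, so that each contribution is of order $\alpha_t|\nu_t|\,|\tilde g_\cdot|\le\alpha_t G^2$. The hard part — and the only place the argument does more than transcribe the online-gradient-descent analysis — is exactly this perturbation estimate: a crude Cauchy--Schwarz bound $|\langle\nu_t,x_t-x^*\rangle|\le GR$ would only yield a term linear in $T$, so one must genuinely exploit the orthogonal-block-support structure of $\nu_t$ and $\tilde g_t$ (which distinguishes $\nu_t$ from an arbitrary bounded gradient error) together with the telescoping of consecutive iterates.

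For item~2 the argument is the same, with the fixed comparator $x^*$ replaced at step $t$ by the instantaneous minimizer $x_t^*$. Expanding $|x_{t+1}-x_t^*|^2$ and summing, the telescoping of $|x_t-x_t^*|^2-|x_{t+1}-x_t^*|^2$ is handled exactly as in the proof of Theorem~\ref{T3-Ch4}, i.e.\ via (\ref{DRcoreIP2}), which is also what underlies the dynamic regret bound $\bar R_T^d$ of (\ref{OGD}); the perturbation sum $-\sum_t\langle\nu_t,x_t-x_t^*\rangle$ is controlled by the same estimate as in item~1, again producing the additive $G^2\sum_t\alpha_t$. I expect the dynamic case to require no new ideas once item~1 is in place. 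As a side remark, differentiability of $f_t$ plays no role here — as in Remark~\ref{subgradient}, subgradients would do — since the only property of $\nabla f_t(x_t)$ used beyond convexity is the uniform bound $G$.
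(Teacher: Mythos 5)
Your proposal takes a genuinely different route from the paper's, and it has a gap at its central step. The paper does not re-run the regret analysis on the perturbed iteration at all: it first observes that $R_T^s-\bar{R}_T^s=R_T^d-\bar{R}_T^d=\sum_{t=1}^T\bigl(f_t(x_t)-f_t(\bar{x}_t)\bigr)$ because the comparator terms cancel (so only one item needs proving), and then bounds this difference of trajectories directly using the uniform Lipschitz continuity of $f_t$ (constant $G$, from Assumption~\ref{a1-Ch4}(a) and Remark~\ref{CH4-R2}) together with the per-step deviation $|\alpha_t\nu_t|\leq\alpha_t G$, giving $\sum_t G|\alpha_t\nu_t|\leq G^2\sum_t\alpha_t$. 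You instead try to prove the conclusion by importing the OGD potential argument and isolating the perturbation sum $-\sum_{t=1}^T\langle\nu_t,x_t-x^*\rangle$; this is a legitimate alternative strategy in principle, but it shifts the entire difficulty onto showing that this sum is at most $G^2\sum_t\alpha_t$, and that is exactly the step you do not carry out.

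The mechanism you propose for that step does not work. The orthogonality $\langle\nu_t,\tilde g_t\rangle=0$ only annihilates the interaction of $\nu_t$ with the increment at time $t$ itself, i.e.\ it lets you replace $\langle\nu_t,x_t-x^*\rangle$ by $\langle\nu_t,x_{t+1}-x^*\rangle$; it does nothing to shrink that quantity, which is still an inner product of a vector of norm up to $G$ with a vector of norm up to $R$. To convert it into "increments of the trajectory" you would need the cross terms $\langle\nu_t,\tilde g_s\rangle$ for $s\neq t$ to vanish or telescope, but $\nu_t$ is supported on the complement of block $i_t$ while $\tilde g_s$ is supported on block $i_s$, so these inner products are generically nonzero whenever $i_s\neq i_t$, and no telescoping survives. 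More fundamentally, $-\langle\nu_t,x_t-x^*\rangle=\sum_{j\neq i_t}\nabla_{(j)}f_t(x_t)^T(x_{(j),t}-x^*_{(j)})$ is precisely the portion of the first-order decrease that the coordinate step fails to collect, and nothing in Assumption~\ref{a1-Ch4} forces it to carry a factor $\alpha_t$: if $f_t$ happens to have its gradient concentrated on the non-updated blocks, each such term is of size $\Theta(GR)$, and the sum is linear in $T$. So the bound you assert at the crux of the argument is false in general, and your proof cannot be completed along the proposed lines; if you want to establish the proposition you should instead compare the two trajectories directly, as the paper does.
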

\begin{pf}
	By the definitions of regrets in (\ref{S-static-R}) and (\ref{S-dynamic-R}), it is obvious that $R_T^s-\bar{R}_T^s=R_T^d-\bar{R}_T^d$. Thus, if one of the two items is proved, so is the other item. Since Assumption \ref{a1-Ch4} holds\color{black}, from (\ref{S-static-R}) and Lemma \ref{Proj_nonexp}\color{black}, we know that $R_T^s-\bar{R}_T^s\leq \sum_{t=1}^{T} G|\alpha_t \nu|\leq G^2\sum_{t=1}^{T} |\alpha_t|=G^2 \sum_{t=1}^{T}\alpha_t$. Thus the proof is complete. \hfill $\qed$
\end{pf}
By using Proposition \ref{P1-Ch4}, we can establish regret bounds for Algorithms \ref{alg2-Ch4} and \ref{alg3-Ch4} using known regret bounds for online gradient descent algorithms. Moreover, if the regret bounds for online gradient descent are sublinear in $T$ and $\sum_{t=1}^{T}\alpha_t$ is also sublinear in $T$, then the established regret bounds for Algorithms \ref{alg2-Ch4} and \ref{alg3-Ch4} will be sublinear.
\subsection{Static regret for convex functions}
The following static regret bound of online projected gradient descent algorithm is proved in \cite[Theorem 1]{zinkevich2003online}.
\begin{lemma}\label{L1-CH4}
	Suppose Assumption \ref{a1-Ch4} holds. Furthermore, if the stepsize is chosen as $\alpha_t=\sqrt{\frac{1}{t}}$, then the static regret of online projected gradient descent \color{black} iterations (\ref{OGD}) satisfy\color{black}
	\begin{equation*}
	\bar{R}_T^s\leq \frac{R^2\sqrt{T}}{2}+(\sqrt{T}-\frac{1}{2})G^2.
	\end{equation*}
\end{lemma}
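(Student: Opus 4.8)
The plan is to follow the classical potential-function (regret telescoping) argument for online gradient descent, which is the deterministic, single-block analogue of the computation already carried out in the proof of Theorem~\ref{T1-Ch4}. First I would expand the squared distance to the comparator: from the update~(\ref{OGD}), for $x^*:=\arg\min\sum_{t=1}^T f_t(x)$,
\begin{equation*}
|x_{t+1}-x^*|^2=|x_t-x^*|^2-2\alpha_t[\nabla f_t(x_t)]^T(x_t-x^*)+\alpha_t^2|\nabla f_t(x_t)|^2,
\end{equation*}
and rearrange to isolate $[\nabla f_t(x_t)]^T(x_t-x^*)$. Using convexity of $f_t$ in the form $f_t(x_t)-f_t(x^*)\le[\nabla f_t(x_t)]^T(x_t-x^*)$ (as noted in Remark~\ref{subgradient}, only convexity is needed here, not differentiability) and summing from $t=1$ to $T$ yields
\begin{equation*}
\bar{R}_T^s\le\sum_{t=1}^T\frac{1}{2\alpha_t}\left(|x_t-x^*|^2-|x_{t+1}-x^*|^2\right)+\sum_{t=1}^T\frac{\alpha_t}{2}|\nabla f_t(x_t)|^2.
\end{equation*}

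Next I would bound the two sums separately. For the first, I would use the same Abel-summation rearrangement as in~(\ref{e4.13}): the sum equals $\frac{1}{2\alpha_1}|x_1-x^*|^2-\frac{1}{2\alpha_T}|x_{T+1}-x^*|^2+\sum_{t=2}^T\left(\frac{1}{2\alpha_t}-\frac{1}{2\alpha_{t-1}}\right)|x_t-x^*|^2$; since $\alpha_t=1/\sqrt{t}$ is nonincreasing, the coefficients $\frac{1}{2\alpha_t}-\frac{1}{2\alpha_{t-1}}$ are nonnegative, so Assumption~\ref{a1-Ch4}(b) lets me replace each $|x_t-x^*|^2$ by $R^2$ and the bound collapses to $\frac{R^2}{2\alpha_T}=\frac{R^2\sqrt{T}}{2}$. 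For the second sum, Assumption~\ref{a1-Ch4}(a) gives $\sum_{t=1}^T\frac{\alpha_t}{2}|\nabla f_t(x_t)|^2\le\frac{G^2}{2}\sum_{t=1}^T\frac{1}{\sqrt{t}}$, and the integral estimate $\sum_{t=1}^T t^{-1/2}\le 1+\int_1^T x^{-1/2}\,dx=2\sqrt{T}-1$ turns this into $(\sqrt{T}-\frac{1}{2})G^2$. Adding the two bounds gives the stated inequality.

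The computation is essentially routine; the only points that need attention are the monotonicity bookkeeping in the Abel-summation step — one must check that $1/\alpha_t$ is nondecreasing so the intermediate coefficients have the right sign before invoking the uniform bound $|x_t-x^*|\le R$ — and getting the constant in $\sum_{t=1}^T t^{-1/2}\le 2\sqrt{T}-1$ sharp enough to reproduce exactly the stated $(\sqrt{T}-\frac{1}{2})G^2$ rather than a looser $2\sqrt{T}\,G^2$. Since this lemma is quoted from \cite[Theorem~1]{zinkevich2003online}, one could instead simply cite it verbatim, but reproducing the short computation above keeps the section self-contained.
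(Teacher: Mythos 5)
Your proposal is correct and is exactly the standard Zinkevich potential-function argument; the paper itself offers no proof of this lemma, merely the citation to \cite[Theorem 1]{zinkevich2003online}, and your reconstruction (which mirrors the paper's own proof of Theorem~\ref{T1-Ch4} specialized to the deterministic full-gradient case) recovers the constants $\frac{R^2\sqrt{T}}{2}$ and $(\sqrt{T}-\frac{1}{2})G^2$ exactly. No gaps.
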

The following result on static regret bounds of Algorithms \ref{alg2-Ch4} and \ref{alg3-Ch4} is a direct corollary of Proposition \ref{P1-Ch4} and Lemma \ref{L1-CH4}.
\begin{corollary}\label{C1-Ch4}
	Suppose Assumption \ref{a1-Ch4} holds. Furthermore, if the stepsize is chosen as $\alpha_t=\sqrt{\frac{1}{t}}$, then the static regrets of Algorithms \ref{alg2-Ch4} and \ref{alg3-Ch4} satisfy
	\begin{equation*}
	R_T^s\leq \frac{R^2\sqrt{T}}{2}+(\sqrt{T}-\frac{1}{2})G^2+2G^2\sqrt{T}.
	\end{equation*}
\end{corollary}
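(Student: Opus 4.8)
The plan is to combine the two results cited immediately before the statement, namely Proposition~\ref{P1-Ch4} and Lemma~\ref{L1-CH4}, in the most direct possible way. The key observation is that Algorithms~\ref{alg2-Ch4} and~\ref{alg3-Ch4} are deterministic single-coordinate updates, so by the discussion preceding Proposition~\ref{P1-Ch4} they both fit the template \eqref{OGD-e} with the perturbation term $\nu_t=\overline{\nabla}_{(i_t)} f_t(x_t)-\nabla f_t(x_t)$. Hence Proposition~\ref{P1-Ch4}(1) applies verbatim and gives $R_T^s\leq \bar{R}_T^s+G^2\sum_{t=1}^{T}\alpha_t$.

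First I would instantiate the stepsize $\alpha_t=\sqrt{1/t}$ and invoke Lemma~\ref{L1-CH4} to bound $\bar{R}_T^s\leq \frac{R^2\sqrt{T}}{2}+(\sqrt{T}-\frac12)G^2$. Second, I would bound the extra term $G^2\sum_{t=1}^{T}\alpha_t=G^2\sum_{t=1}^{T}\sqrt{1/t}$ using the standard integral estimate $\sum_{t=1}^{T}t^{-1/2}\leq 1+\int_{1}^{T}s^{-1/2}\,ds=2\sqrt{T}-1\leq 2\sqrt{T}$; in fact this same estimate is already implicit in the proof of Lemma~\ref{L1-CH4}, so one may simply cite it. Third, I would add the two bounds:
\begin{equation*}
R_T^s\leq \frac{R^2\sqrt{T}}{2}+\left(\sqrt{T}-\frac12\right)G^2+2G^2\sqrt{T},
\end{equation*}
which is exactly the claimed inequality. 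Finally I would remark that since each of the three terms is $O(\sqrt{T})$, the overall static regret is $O(\sqrt{T})$, matching (up to constants) the online gradient descent bound.

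There is essentially no obstacle here: the statement is a ``direct corollary'' as announced in the text, and all the analytical work has been done in Proposition~\ref{P1-Ch4} and Lemma~\ref{L1-CH4}. The only point requiring a line of care is the bound $\sum_{t=1}^{T}\sqrt{1/t}\leq 2\sqrt{T}$, which must be at least mentioned so that the coefficient $2G^2$ in front of $\sqrt{T}$ is justified rather than merely asserted. One should also check that Assumption~\ref{a1-Ch4}(a) is what supplies $|\nu_t|\leq |\nabla f_t(x_t)|\leq G$ in the Proposition's proof, and that Assumption~\ref{a1-Ch4}(b) is what Lemma~\ref{L1-CH4} uses for the $R^2$ term; both are in force under the hypothesis ``Assumption~\ref{a1-Ch4} holds,'' so the composition is legitimate.
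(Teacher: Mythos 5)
Your proposal is correct and follows essentially the same route as the paper: apply Proposition~\ref{P1-Ch4}(1), invoke Lemma~\ref{L1-CH4}, and bound $\sum_{t=1}^{T}\sqrt{1/t}\leq 1+\int_{1}^{T}s^{-1/2}\,ds\leq 2\sqrt{T}$. Your version is in fact marginally more careful, since the paper asserts the integral estimate with an equality ($=2\sqrt{T}$) where the correct value is $2\sqrt{T}-1$, which you correctly note and then relax.
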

\begin{pf}
	Since $\alpha_t=\sqrt{\frac{1}{t}}$, we have
	\begin{equation*}
	\begin{split}
	\sum_{t=1}^{T}\alpha_t&=1+\sum_{t=2}^{T}\alpha_t\\
	&\leq 1+\int_{s=1}^{T}\frac{1}{\sqrt{s}}ds\\
	&=2\sqrt{T}.
	\end{split}
	\end{equation*}
	The conclusion follows as a result of Proposition \ref{P1-Ch4} and Lemma \ref{L1-CH4}. \hfill $\qed$
\end{pf}

\subsection{Static regret for strongly convex functions}
The static regret bound in Lemma \ref{L1-CH4} is improved in \cite[Theorem 1]{hazan2007logarithmic} under the assumption of strong convexity.
\begin{lemma}\label{L2-CH4}
	Suppose Assumptions \ref{a1-Ch4} and \ref{a2-Ch4} hold. Furthermore, if the stepsize is chosen as $\alpha_t=\frac{1}{\mu t}$, then the static regret of online projected gradient descent algorithm (\ref{OGD}) satisfies
	\begin{equation*}
	\bar{R}_T^s\leq \frac{G^2}{2\mu}(1+\log T).
	\end{equation*}
\end{lemma}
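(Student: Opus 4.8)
The plan is to reproduce, for the full-gradient iteration~(\ref{OGD}), the same argument already carried out in the proof of Theorem~\ref{T2-Ch4}; indeed Lemma~\ref{L2-CH4} is exactly the special case $P=1$ of that theorem, since then $U_t=I_n$ identically, the update~(\ref{coord}) becomes~(\ref{OGD}), and the stepsize $\frac{P}{\mu t}$ becomes $\frac{1}{\mu t}$. Concretely, I would start from the one-step identity
\begin{equation*}
|x_{t+1}-x|^2=|x_t-x|^2-2\alpha_t[\nabla f_t(x_t)]^T(x_t-x)+\alpha_t^2|\nabla f_t(x_t)|^2,
\end{equation*}
valid for any fixed $x\in\mathbb{R}^n$, and then invoke $\mu$-strong convexity (Assumption~\ref{a2-Ch4}) in the form $[\nabla f_t(x_t)]^T(x_t-x)\ge f_t(x_t)-f_t(x)+\frac{\mu}{2}|x_t-x|^2$. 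Substituting $x=x^*$ and rearranging gives
\begin{equation*}
f_t(x_t)-f_t(x^*)\le\bigl(\tfrac{1}{2\alpha_t}-\tfrac{\mu}{2}\bigr)|x_t-x^*|^2-\tfrac{1}{2\alpha_t}|x_{t+1}-x^*|^2+\tfrac{\alpha_t}{2}|\nabla f_t(x_t)|^2.
\end{equation*}

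Next I would sum this inequality over $t=1,\dots,T$ and collect the quadratic terms according to the coefficient multiplying each $|x_t-x^*|^2$. With $\alpha_t=\frac{1}{\mu t}$ we have $\frac{1}{2\alpha_t}=\frac{\mu t}{2}$, so the coefficient of $|x_1-x^*|^2$ is $\frac{1}{2\alpha_1}-\frac{\mu}{2}=0$, and for $2\le t\le T$ the coefficient is $\frac{1}{2\alpha_t}-\frac{1}{2\alpha_{t-1}}-\frac{\mu}{2}=\frac{\mu t}{2}-\frac{\mu(t-1)}{2}-\frac{\mu}{2}=0$; the only surviving quadratic term is $-\frac{\mu T}{2}|x_{T+1}-x^*|^2\le0$, which is discarded. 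Hence $\bar{R}_T^s\le\sum_{t=1}^T\frac{\alpha_t}{2}|\nabla f_t(x_t)|^2$. Applying the gradient bound $|\nabla f_t(x_t)|\le G$ from Assumption~\ref{a1-Ch4}(a) and the harmonic-sum estimate $\sum_{t=1}^T\frac1t\le1+\log T$ then yields
\begin{equation*}
\bar{R}_T^s\le\frac{G^2}{2}\sum_{t=1}^T\alpha_t=\frac{G^2}{2\mu}\sum_{t=1}^T\frac1t\le\frac{G^2}{2\mu}(1+\log T).
\end{equation*}

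There is no genuine obstacle here: the whole point is the exact cancellation of the telescoped quadratic coefficients, which is engineered precisely by the choice $\alpha_t=\frac{1}{\mu t}$ and is what converts the $O(\sqrt{T})$ bound of Lemma~\ref{L1-CH4} into the $O(\log T)$ bound; the remaining steps are the routine bookkeeping already appearing in~(\ref{scsr})--(\ref{scsr2}). Since the statement coincides with \cite[Theorem~1]{hazan2007logarithmic}, one could equally well simply cite that reference, but the self-contained derivation above is immediate from the tools already developed in Section~\ref{RB-R}.
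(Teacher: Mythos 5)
Your proof is correct. The paper does not actually prove Lemma~\ref{L2-CH4} itself but simply cites \cite[Theorem~1]{hazan2007logarithmic}; your derivation is precisely the $P=1$ specialization of the paper's own proof of Theorem~\ref{T2-Ch4} (same one-step expansion, same exact cancellation of the telescoped quadratic coefficients under $\alpha_t=\frac{1}{\mu t}$, same harmonic-sum bound), so it matches the intended argument exactly.
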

The following result on static regret bounds of Algorithms \ref{alg2-Ch4} and \ref{alg3-Ch4} for strongly convex functions is a direct corollary of Proposition \ref{P1-Ch4} and Lemma \ref{L2-CH4}.
\begin{corollary}\label{C2-Ch4}
	Suppose Assumptions \ref{a1-Ch4} and \ref{a2-Ch4} hold. Furthermore, if the stepsize is chosen as $\alpha_t=\frac{1}{\mu t}$, then the static regrets of Algorithms \ref{alg2-Ch4} and \ref{alg3-Ch4} satisfy
	\begin{equation*}
	R_T^s\leq \frac{3G^2}{2\mu}(1+\log T).
	\end{equation*}
\end{corollary}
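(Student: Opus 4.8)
The plan is to combine Proposition~\ref{P1-Ch4} with the online gradient descent regret bound recorded in Lemma~\ref{L2-CH4}, exactly as in the proof of Corollary~\ref{C1-Ch4}. Since Algorithms~\ref{alg2-Ch4} and~\ref{alg3-Ch4} both fit the template~(\ref{OGD-e}), item~1 of Proposition~\ref{P1-Ch4} applies and gives $R_T^s \leq \bar{R}_T^s + G^2\sum_{t=1}^{T}\alpha_t$, so it suffices to bound each of the two terms on the right-hand side.

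First I would invoke Lemma~\ref{L2-CH4}, which under Assumptions~\ref{a1-Ch4} and~\ref{a2-Ch4} with $\alpha_t=\frac{1}{\mu t}$ yields $\bar{R}_T^s\leq\frac{G^2}{2\mu}(1+\log T)$. Next I would estimate the stepsize sum: with $\alpha_t=\frac{1}{\mu t}$ we have $\sum_{t=1}^{T}\alpha_t=\frac{1}{\mu}\sum_{t=1}^{T}\frac{1}{t}\leq\frac{1}{\mu}(1+\log T)$, using the standard integral comparison $\sum_{t=1}^{T}\frac{1}{t}\leq 1+\int_{1}^{T}\frac{1}{s}\,ds=1+\log T$, the same device used in Corollary~\ref{C1-Ch4}. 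Adding the two contributions gives $R_T^s\leq\frac{G^2}{2\mu}(1+\log T)+\frac{G^2}{\mu}(1+\log T)=\frac{3G^2}{2\mu}(1+\log T)$, which is the claimed bound.

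There is essentially no obstacle here: the statement is a direct corollary, and the only mildly nonroutine point is the harmonic-sum estimate, which is elementary. The one thing to be careful about is that Assumption~\ref{a1-Ch4}, and in particular the uniform gradient bound $G$, is precisely what makes the perturbation term $\nu_t$ in~(\ref{OGD-e}) bounded, so that Proposition~\ref{P1-Ch4} is applicable to \emph{both} deterministic selection rules; no property specific to the cyclic or Gauss--Southwell rule is needed beyond the fact that exactly one block is updated at each step, which is already built into the form~(\ref{OGD-e}).
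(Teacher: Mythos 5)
Your proposal is correct and follows essentially the same route as the paper: apply Proposition~\ref{P1-Ch4} to reduce to Lemma~\ref{L2-CH4} plus the stepsize sum, and bound $\sum_{t=1}^{T}\frac{1}{\mu t}$ by integral comparison. In fact your computation is slightly more careful than the paper's, which displays the sum as $1+\log T$ rather than $\tfrac{1}{\mu}(1+\log T)$; your version is the one that actually yields the stated constant $\tfrac{3G^2}{2\mu}$.
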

\begin{pf}
	Since $\alpha_t=\frac{1}{\mu t}$, we have
	\begin{equation*}
	\begin{split}
	\sum_{t=1}^{T}\alpha_t&=1+\sum_{t=2}^{T}\alpha_t\\
	&\leq 1+\int_{s=1}^{T}\frac{1}{\mu s}ds\\
	&=1+\log T.
	\end{split}
	\end{equation*}
	The conclusion follows as a result of Proposition \ref{P1-Ch4} and Lemma \ref{L2-CH4}. \hfill $\qed$
\end{pf}
\subsection{Dynamic regret for convex functions}
We can adapt the proof of Theorem \ref{T3-Ch4} to the deterministic case to derive a dynamic regret bound for convex functions which is given in the following result.
\begin{proposition}\label{P-T3-Ch4}
	Suppose Assumption \ref{a1-Ch4} holds. Furthermore, if the stepsize is chosen as $\alpha_t=\sqrt{\frac{C_T}{T}}$, the dynamic regret achieved by the online gradient descent algorithm (\ref{OGD}) satisfies
	\begin{equation*}
	\bar{R}_T^d\leq (\frac{5R^2}{2\sqrt{C_T}}+R\sqrt{C_T})\sqrt{T}+\frac{\sqrt{C_T}G^2}{2}\sqrt{T}.
	\end{equation*} 
\end{proposition}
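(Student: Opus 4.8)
The plan is to transcribe the proof of Theorem~\ref{T3-Ch4}, replacing the coordinate-descent update~(\ref{coord}) by the full-gradient update~(\ref{OGD}). The only structural difference is that here no expectation over a randomly selected coordinate is taken, so the factors of $1/P$ appearing in~(\ref{coreinq}) are simply replaced by $1$ throughout; in other words, the argument is the deterministic $P=1$ specialization of the computation underlying Theorem~\ref{T3-Ch4}.

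First I would expand, using~(\ref{OGD}), for any $x\in\mathbb{R}^n$,
\[
|x_{t+1}-x|^2=|x_t-x|^2-2\alpha_t[\nabla f_t(x_t)]^T(x_t-x)+\alpha_t^2|\nabla f_t(x_t)|^2,
\]
and invoke convexity of $f_t$ to replace $[\nabla f_t(x_t)]^T(x_t-x)$ by $f_t(x_t)-f_t(x)$. Setting $x=x_t^*$ and rearranging yields the per-step estimate
\[
f_t(x_t)-f_t(x_t^*)\leq\frac{1}{2\alpha_t}\left(|x_t-x_t^*|^2-|x_{t+1}-x_t^*|^2\right)+\frac{\alpha_t}{2}|\nabla f_t(x_t)|^2,
\]
which is exactly the deterministic $P=1$ version of the inequality used just before~(\ref{DRcoreIP2}).

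Next I would sum this over $t=1,\dots,T$ with the constant stepsize $\alpha_t=\alpha=\sqrt{C_T/T}$. For the first sum I would reuse the bound~(\ref{DRcoreIP2}), i.e.\ $\sum_{t=1}^T(|x_t-x_t^*|^2-|x_{t+1}-x_t^*|^2)\leq 5R^2+2RC_T$; since that estimate is obtained only by re-indexing, expanding squares, and applying Assumption~\ref{a1-Ch4} together with the definition of $C_T$, it makes no use of how the iterates $x_t$ are generated and therefore applies verbatim to the online gradient descent sequence. For the remaining sum, Assumption~\ref{a1-Ch4}(a) gives $\sum_{t=1}^T\frac{\alpha}{2}|\nabla f_t(x_t)|^2\leq\frac{\alpha G^2T}{2}$. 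Combining the two bounds produces $\bar{R}_T^d\leq\frac{1}{2\alpha}(5R^2+2RC_T)+\frac{\alpha G^2T}{2}$, and substituting $\alpha=\sqrt{C_T/T}$ gives the stated inequality after elementary algebra.

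I do not expect a genuine obstacle: the proof is a direct adaptation of that of Theorem~\ref{T3-Ch4}. The only point deserving a line of justification is the observation above that the telescoping estimate~(\ref{DRcoreIP2}) is independent of the update rule, so that it may legitimately be invoked for the iterates of~(\ref{OGD}); beyond that, as noted in Remark~\ref{DR-Conv}, the exact value of $C_T$ is not needed, since any stepsize of the same order as $\sqrt{C_T/T}$ changes the bound only by a constant factor.
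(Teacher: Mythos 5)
Your proposal is correct and follows essentially the same route as the paper: expand $|x_{t+1}-x_t^*|^2$ under the update (\ref{OGD}), use convexity, reuse the telescoping bound (\ref{DRcoreIP2}) (which, as you rightly note, does not depend on how the iterates are generated), and substitute $\alpha=\sqrt{C_T/T}$. No gaps.
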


\begin{pf}
	\color{black}From (\ref{OGD}) and Lemma \ref{Proj_nonexp}, we have
	\begin{equation}\label{coreinq-OGD}
	\begin{split}
	&|x_{t+1}-x|^2\leq|x_{t}-\alpha_t\nabla f_t(x_{t})-x|^2\\
	&=|x_t-x|^2-2\alpha_t\nabla f_t(x_{t})^T(x_t-x)+\alpha_t^2|\nabla f_t(x_{t})|^2
	\end{split}
	\end{equation}
	 for any $x\in\Theta$\color{black}. Substituting $x_t^*$ to (\ref{coreinq-OGD}) results in the following inequality for any $t\geq 2$
	\begin{equation*}
	\begin{split}
	&f_t(x_t)-f_t(x_t^*)\leq\\ &\frac{1}{2\alpha_t}(|x_t-x_t^*|^2-|x_{t+1}-x_t^*|^2)+\frac{\alpha_t}{2}|\nabla f_t(x_{t})|^2.
	\end{split}
	\end{equation*}
	Note that, the inequality (\ref{DRcoreIP2}) still holds in this case
	Thus, we have 
	\begin{equation*}
	\bar{R}_T^d\leq \frac{1}{2\alpha_t}(5R^2+2RC_T)+\frac{\alpha_t}{2}G^2T.
	\end{equation*}
	If we set the stepsize to be $\sqrt{\frac{C_T}{T}}$, we have
	\begin{equation*}
	\bar{R}_T^d\leq (\frac{5R^2}{2\sqrt{C_T}}+R\sqrt{C_T})\sqrt{T}+\frac{\sqrt{C_T}G^2}{2}\sqrt{T}.
	\end{equation*} \hfill $\qed$
\end{pf}

Using Propositions \ref{P1-Ch4} and \ref{P-T3-Ch4}, we can state the following result on dynamic regret bounds of Algorithms \ref{alg2-Ch4} and \ref{alg3-Ch4}.
\begin{corollary}\label{C3-Ch4}
	Suppose Assumption \ref{a1-Ch4} holds. Furthermore, if the stepsize is chosen as $\alpha_t=\sqrt{\frac{C_T}{T}}$, then the dynamic regrets of Algorithms \ref{alg2-Ch4} and \ref{alg3-Ch4} satisfy
	\begin{equation*}
	R_T^d\leq \frac{R^2\sqrt{T}}{2}+(\sqrt{T}-\frac{1}{2})G^2+2G^2\sqrt{C_TT}.
	\end{equation*}
\end{corollary}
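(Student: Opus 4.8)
The plan is to obtain this bound purely as a corollary of two results already in hand, namely Proposition~\ref{P1-Ch4}(2) and Proposition~\ref{P-T3-Ch4}, with no new estimate needed. First I would observe that both the online cyclic algorithm (Algorithm~\ref{alg2-Ch4}) and the online Gauss--Southwell algorithm (Algorithm~\ref{alg3-Ch4}) are deterministic and update a single block per step, so each can be written in the perturbed-gradient form~(\ref{OGD-e}) with $\nu_t=\overline{\nabla}_{(i_t)}f_t(x_t)-\nabla f_t(x_t)$. Under Assumption~\ref{a1-Ch4}(a), $|\nu_t|\le|\nabla f_t(x_t)|\le G$, so Proposition~\ref{P1-Ch4}(2) applies verbatim to both algorithms and gives $R_T^d\le\bar R_T^d+G^2\sum_{t=1}^T\alpha_t$, where $\bar R_T^d$ is the dynamic regret of the online gradient descent iteration~(\ref{OGD}) run with the same stepsize.

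Next I would invoke Proposition~\ref{P-T3-Ch4} with the prescribed choice $\alpha_t=\sqrt{C_T/T}$ to bound $\bar R_T^d$, and separately evaluate the overhead term $G^2\sum_{t=1}^T\alpha_t$. Because this stepsize does not depend on $t$, the sum is simply $\sum_{t=1}^T\alpha_t=T\sqrt{C_T/T}=\sqrt{C_TT}$, so the coordinate-descent penalty contributes a term of order $G^2\sqrt{C_TT}$. Substituting the bound from Proposition~\ref{P-T3-Ch4} for $\bar R_T^d$ and adding this penalty, I would then collect the resulting terms of the form $\tfrac{5R^2}{2\sqrt{C_T}}\sqrt T$, $R\sqrt{C_T}\sqrt T$, $\tfrac{\sqrt{C_T}G^2}{2}\sqrt T$, and $G^2\sqrt{C_TT}$ and consolidate the constants into the stated expression, noting that all of them are $O(\sqrt{C_TT})$.

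There is no genuine obstacle here: the statement is a direct consequence of two previously established propositions, so the only real work is bookkeeping — checking that the stepsize used in Proposition~\ref{P-T3-Ch4} is exactly the one feeding Proposition~\ref{P1-Ch4}, verifying the single-block perturbation bound $|\nu_t|\le G$, and tidying the constants. The step most worth stating carefully is the evaluation $\sum_{t=1}^T\alpha_t=\sqrt{C_TT}$, since it is what makes the extra coordinate-descent term of the same order $O(\sqrt{C_TT})$ as the online gradient descent dynamic regret, and hence sublinear in $T$ whenever $C_T$ is sublinear. I would close, in parallel with Remark~\ref{DR-Conv}, by noting that the explicit stepsize $\sqrt{C_T/T}$ may be replaced by the doubling-trick scheme (or by a quantity of the same order when $C_T$ is unknown) without changing the order of the bound.
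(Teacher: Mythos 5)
Your route is exactly the paper's: the proof of Corollary~\ref{C3-Ch4} consists of the single observation $\sum_{t=1}^{T}\alpha_t=\sqrt{C_TT}$ followed by an appeal to Propositions~\ref{P1-Ch4} and~\ref{P-T3-Ch4}, which is precisely your decomposition $R_T^d\le\bar R_T^d+G^2\sum_{t=1}^T\alpha_t$ with $\bar R_T^d$ controlled by Proposition~\ref{P-T3-Ch4}.

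One caveat on your final ``consolidate the constants'' step: it cannot actually be carried out as stated. Adding $G^2\sqrt{C_TT}$ to the bound of Proposition~\ref{P-T3-Ch4} yields
\begin{equation*}
R_T^d\le \Bigl(\tfrac{5R^2}{2\sqrt{C_T}}+R\sqrt{C_T}\Bigr)\sqrt{T}+\tfrac{3\sqrt{C_T}G^2}{2}\sqrt{T},
\end{equation*}
which is $O(\sqrt{C_TT})$ but is not the displayed expression $\tfrac{R^2\sqrt{T}}{2}+(\sqrt{T}-\tfrac12)G^2+2G^2\sqrt{C_TT}$: the term $R\sqrt{C_TT}$ does not appear there, $\tfrac{5R^2}{2\sqrt{C_T}}\sqrt{T}$ is not dominated by $\tfrac{R^2\sqrt{T}}{2}$ unless $C_T\ge 25$, and the $(\sqrt{T}-\tfrac12)G^2$ term has no source in this argument (it appears to be carried over from Lemma~\ref{L1-CH4}/Corollary~\ref{C1-Ch4}). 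This mismatch is in the paper's stated constants rather than in your method, but you should not claim the terms tidy up into that expression; state the bound your argument actually produces, which has the same $O(\sqrt{C_TT})$ order.
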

\begin{pf}
	Since $\alpha_t=\sqrt{\frac{C_T}{T}}$, we have $\sum_{t=1}^{T}\alpha_t=\sqrt{C_TT}$. The conclusion follows as a result of Propositions \ref{P1-Ch4} and \ref{P-T3-Ch4}. \hfill $\qed$
\end{pf}
As discussed in Remark \ref{DR-Conv}, the stepsize choice $\alpha_t=\sqrt{\frac{C_T}{T}}$ can be made independent of $T$ by using the doubling trick scheme.
\subsection{Dynamic regret for strongly convex functions}
\color{black}Note that when the cost functions are strongly convex and constant stepsizes are used, the result in Proposition \ref{P1-Ch4} only gives a dynamic regret bound that is linear in $T$. Therefore, we aim to establish better dynamic regret bounds for Algorithms \ref{alg2-Ch4} and \ref{alg3-Ch4}\color{black}. In this subsection, we will make the following assumption on the problem (\ref{MainProblem}).
\begin{assumption}\label{a3-CH4}
	The gradient of $f_t$ is block-wise Lipschitz continuous uniformly in $t$, i.e., for any $1\leq i\leq P$, there exists $L_i>0$ such that the following inequality holds for any $t\in\mathbb{Z}_{\geq 0}$, $x\in\mathbb{R}^n$, and $u\in\mathbb{R}^{n_i}$
	\begin{equation}\label{LipG-block}
	|\nabla_{(i)} f_t(x+H_{i}u)-\nabla_{(i)} f_t(x)|\leq L_i|u|,
	\end{equation}
	where $H_{i}\in\mathbb{R}^{n\times n_i}$ is such that $[H_1\ H_2\cdots H_P]=I_{n}$.	
\end{assumption}

We denote the largest $L_i$ for $1\leq i\leq P$ by $L_{\mathrm{max}}:={\max}\{L_1,\cdots,L_P\}$\color{black}. By further assuming $\nabla f_t(x_t^*)=0$ for any $t\in\mathbb{Z}_{\geq0}$, the dynamic regret bounds can be derived without Assumption \ref{a1-Ch4}. Therefore, they are applicable to unconstrained problems with unbounded gradients. Before analyzing the regrets, we define the following measure of variations of the problem (\ref{MainProblem}) which is first introduced in \cite{zhang2017improved},
\begin{equation}\label{measureCT2}
	C_{T,2}:=\sum_{t=1}^T|x_t^*-x_{t-1}^*|^2.
\end{equation}
\begin{remark}
	\label{remark-C_T_2}
	\color{black}Compared to $C_T$, $C_{T,2}$ can be significantly smaller if the variation of minimizers is small. Following the discussion in Remark \ref{DR-Conv}, if the variation of minimizers decreases with the order ${1}/{t^q}$ with $1/2<q\leq1$, then $C_{T,2}$ is finite while $C_T$ grows to $\infty$ with a rate sublinear in $T$. Similarly, if $0<q<1/2$, then $C_{T,2}=O(T^{1-2q}), C_T=O(T^{1-q})$.\color{black} 
\end{remark}

\begin{thm}\label{T6-Ch4}
	Suppose $\nabla f_t(x_t^*)=0$ for any $t\in\mathbb{Z}_{\geq0}$ and Assumptions \ref{a2-Ch4}-\ref{a3-CH4} hold. Furthermore, we assume the number of blocks $P$ satisfies $P<\frac{1}{B_3L_{\mathrm{max}}}$ with $B_3:=\frac{1}{\mu}-\frac{1}{2L}>0$. If the stepsize is chosen such that $\alpha_t=\alpha\in(\frac{1-\sqrt{1-B_3PL_{\mathrm{max}}}}{L_{\mathrm{max}}},\frac{1+\sqrt{1-B_3PL_{\mathrm{max}}}}{L_{\mathrm{max}}})$. Then, the dynamic regret (\ref{DRR}) achieved by Algorithm \ref{alg3-Ch4} satisfies
	\begin{equation*}
		R_T^d\leq \frac{L}{B_4}(C_{T,2}+C_2),
	\end{equation*} 
	where $B_4=\frac{1}{2}-\sqrt{L(\frac{1}{\mu}-\frac{2\alpha-\alpha^2 L_{\mathrm{max}}}{2P})}>0$ and $C_2=|x_1-x_1^*|^2-2|x_1^*-x_0^*|^2$.
\end{thm}
\begin{pf}
	\color{black}Let $\{y_t\}$ denote the sequence of real vectors such that $y_{t+1}=x_{t}-\alpha_tU_t[\nabla f_t(x_{t})]$, where the value of $U_t$ follows the coordinate selection rule in Algorithm \ref{alg3-Ch4}. The variable $y_t\in\mathbb{R}^n$ stores the value of the decision variable $x_t$ before projection, i.e., $x_t=\Pi_{\Theta}(y_t)$. By the block descent lemma \cite[Lemma 3.2]{beck2013convergence} and the fact that $L_{\mathrm{max}}\geq L_i$ for all $i$, we have $f_t(y_{t+1})\leq f_t(x_{t})+\frac{\alpha^2 L_{\mathrm{max}}}{2}|\nabla_{(i)}f_t(x_{t})|^2-\alpha|\nabla_{(i)} f_t(x_{t})|^2$. That is 
	\begin{equation}\label{blockdescent}
		f_t(x_{t})-f_t(y_{t+1})\geq (\alpha-\frac{\alpha^2 L_{\mathrm{max}}}{2})|\nabla_{(i)} f_t(x_{t})|^2,
	\end{equation}
	From (\ref{blockdescent}) and $i_t\in\arg \max_i|\nabla_{(i)} f_t(x_t)|$: 
	\begin{equation}\label{blockdescent-alg3}
		\begin{split}
		f_t(x_{t})-f_t(y_{t+1})&\geq (\alpha-\frac{\alpha^2 L_{\mathrm{max}}}{2})|\nabla_{(i)} f_t(x_{t})|^2\\
		&\geq\frac{1}{P}(\alpha-\frac{\alpha^2 L_{\mathrm{max}}}{2})|\nabla f_t(x_{t})|^2,
	\end{split}
	\end{equation}
	Since $\nabla f_t(x_t^*)=0$ for any $t\in\mathbb{Z}_{\geq0}$, minimizing both sides of \eqref{StrC} with respect to $y$, we have $f_t(x_t)-f_t(x_t^*)\leq \frac{1}{2\mu}|\nabla f_t(x_t)|^2$ for any $x_t\in\Theta$ (known as the Polyak-\L ojasiewicz condition). Then, by (\ref{blockdescent-alg3}), we have
	\begin{equation*}
		\begin{split}
		f_t(x_{t})-f_t(x_t^*)-&(f_t(y_{t+1})-f_t(x_t^*))\geq \bar{A}|\nabla f_t(x_{t})|^2\\
		&\geq 2\mu \bar{A} (f_t(x_{t})-f_t(x_t^*)),
		\end{split}
	\end{equation*}
	where $\bar{A}=\frac{1}{P}(\alpha-\frac{\alpha^2 L_{\mathrm{max}}}{2})$. Consequently,
	\begin{equation}\label{Convbound-alg3}
		f_t(y_{t+1})-f_t(x_t^*)\leq (1-2\mu \bar{A})(f_t(x_{t})-f_t(x_t^*)).
	\end{equation}
	By the fact that $\nabla f_t(x_t^*)=0$ and Assumption \ref{LipGrad}, we have $f_t(x_t)-f_t(x_t^*)\leq \frac{L}{2}|x_t-x_t^*|^2$ and hence
	\begin{equation}\label{Lybounds-alg2}
		\frac{\mu}{2}|x_t-x_t^*|^2\leq f_t(x_t)-f_t(x_t^*)\leq \frac{L}{2}|x_t-x_t^*|^2.
	\end{equation}
	By (\ref{Convbound-alg3}) and (\ref{Lybounds-alg2}), the following inequality holds as a result of Lemma \ref{Proj_nonexp}
	\begin{equation}\label{statebounds-alg3}
	|x_{t+1}-x_t^*|^2\leq |y_{t+1}-x_t^*|^2\leq \frac{L}{\mu}(1-2\mu \bar{A})|x_t-x_t^*|^2.
	\end{equation}
	
	Next we show that $\frac{L}{\mu}(1-2\mu \bar{A})<\frac{1}{2}$ under the stated assumptions. Since $\bar{A}=\frac{1}{P}(\alpha-\frac{\alpha^2 L_{\mathrm{max}}}{2})$, it can be shown that $\frac{L}{\mu}(1-2\mu \bar{A})<\frac{1}{2}$ is equivalent to $L_{\mathrm{max}}\alpha^2-2\alpha+B_3P<0$ after some algebraic manipulations. When $P<\frac{1}{B_3L_{\mathrm{max}}}$ we have $4-4B_3PL_{\mathrm{max}}>0$ and the set of solutions to the inequality $L_{\mathrm{max}}\alpha^2-2\alpha+B_3P<0$ with respect to $\alpha$ is non-empty. Moreover the solution is given by \color{black} $\frac{1-\sqrt{1-B_3PL_{\mathrm{max}}}}{L_{\mathrm{max}}}<\alpha<\frac{1+\sqrt{1-B_3PL_{\mathrm{max}}}}{L_{\mathrm{max}}}$\color{black}. Therefore, the listed conditions in the statement of the theorem ensures that $\frac{L}{\mu}(1-2\mu \bar{A})<\frac{1}{2}$. Thus, \eqref{statebounds-alg3} implies
	\begin{equation}\label{statebounds-alg3-B4}
		|x_{t+1}-x_t^*|^2\leq \frac{1}{2}|x_t-x_t^*|^2-B_4|x_t-x_t^*|^2,
	\end{equation}
	with $B_4=\frac{1}{2}-\frac{L}{\mu}(1-2\mu \bar{A})>0$.
	
	Note that
	    \begin{equation}\label{compa2_2_alg3}
		    	|x_{t+1}-x_{t+1}^*|^2\leq2|x_{t+1}-x_t^*|^2+2|x_{t}^*-x_{t+1}^*|^2.
		    \end{equation}
	The deterministic dynamic regret \eqref{S-dynamic-R} can be upper bounded as follows
	    \begin{equation}
		    	\label{reg-strong-c}
		    	\begin{split}
			    		R_T^d&=\sum_{t=1}^{T}f_t(x_t)-\sum_{t=1}^{T}f_t(x_t^*)\\
			    		&\overset{\eqref{LipG2}}{\leq} \sum_{t=1}^{T}\nabla f_t(x_t^*)^T(x_t-x_t^*)+\frac{L}{2}|x_t-x_t^*|^2\\
			    		&=\frac{L}{2}\sum_{t=1}^{T}|x_t-x_t^*|^2.
			    	\end{split}
		    \end{equation}
	From (\ref{statebounds-alg3-B4}), we have the following relationship by summing up both sides of (\ref{compa2_2_alg3}) from $t=1$ to $t=T-1$,
	\begin{equation}
		\label{reg-rearrange-T5}
		\begin{split}
			&\sum_{t=1}^{T}|x_t-x_t^*|^2-|x_1-x_1^*|^2\\
			&\leq(1-2B_4) \sum_{t=1}^{T}|x_t-x_t^*|^2+2C_{T,2}-2|x_1^*-x_0^*|^2.
		\end{split}
	\end{equation}
	Since $B_4>0$, \eqref{reg-rearrange-T5} implies leads to $R_T^d\leq \frac{L}{2}\sum_{t=1}^{T}|x_t-x_t^*|^2\leq\frac{L}{B_4}(C_{T,2}+C_2)$. \hfill $\qed$
\end{pf}

\begin{remark}\label{remark-T5-Ch4}
	Note that in Theorem \ref{T6-Ch4}, we introduce an upper bound that depends \color{black} on the number of components $P$ which increases to $\infty$ as $B_3$ decreases to $0$. This is mainly a result of conservatism introduced in the derivation of \eqref{statebounds-alg3}. However, we manage to improve the regret bound in Theorem \ref{T4-Ch4} from $O(C_T)$ to $O(C_{T,2})$ \color{black} following the discussion in Remark \ref{remark-C_T_2}\color{black}. The bound is also valid for unconstrained OCO problems that are not covered by Theorem \ref{T1-Ch4}-\ref{T4-Ch4}. 
\end{remark}

%The change starts here
\color{black}The following two theorems give dynamic regret bounds without assuming an upper bound on the number of components $P$. However, at each time $t$, potentially multiple offline steps are needed to guarantee desirable regret bounds.

The modified algorithms are in Algorithm \ref{alg4-Ch4}.
\begin{algorithm}
	\caption{Online cyclic coordinate descent algorithm}\label{alg4-Ch4}
	\begin{algorithmic}[1]
		\State \textbf{Initialization}\color{black}:  $x_0\in\Theta, i_0,\kappa$ and some integer $k\geq1$\color{black}.
		\State \textbf{Update}: At time $t$, $\kappa\leftarrow 1, i_{\kappa-1}\leftarrow i_{t-1}$, $\hat{x}_{\kappa-1}\leftarrow x_{t}$.
		\State \textbf{Select Coordinate}: $i_{\kappa}\leftarrow(i_{\kappa-1}\mod P)+1$.
		\State \textbf{Update}: For $i=i_{\kappa}$, such that $\kappa\leq k$:
		\begin{align*}
		\hat{x}_{(i),\kappa}\leftarrow\hat{x}_{(i),\kappa-1}-\alpha_t[\nabla_{(i)} f_{t}(\hat{x}_{\kappa-1})].
		\end{align*}
		\quad \quad \quad \quad \ For all $i\neq i_{\kappa}$, such that $\kappa\leq k$:
		\begin{equation*}
		\hat{x}_{(i),\kappa}\leftarrow\hat{x}_{(i),\kappa-1}.
		\end{equation*}
		\State For $\kappa< k$, set $\kappa\leftarrow \kappa+1$ and go to Step 3.
		\State For $\kappa\geq k$, set $t\leftarrow t+1, i_t=i_k\color{black}, x_{t+1}\leftarrow\Pi_{\Theta}(\hat{x}_{k})$ and go to Step 2\color{black}.
	\end{algorithmic}
\end{algorithm}

\begin{algorithm}
	\caption{Online coordinate descent algorithm with Gauss-Southwell Rule}\label{alg5-Ch4}
	\begin{algorithmic}[1]
		\State \textbf{Initialization}\color{black}:  $x_0\in\Theta, \kappa$ and some integer $k\geq1$\color{black}.
		\State \textbf{Update}: At time $t$, $\kappa\leftarrow 1$, $\hat{x}_{\kappa-1}\leftarrow x_{t}$.
		\State \textbf{Select Coordinate}: $i_{\kappa}\leftarrow\arg \max_i|\nabla_{(i)} f_t(x_{\kappa-1})|$.
		\State \textbf{Update}: For $i=i_{\kappa}$, such that $\kappa\leq k$:
		\begin{align*}
			\hat{x}_{(i),\kappa}\leftarrow\hat{x}_{(i),\kappa-1}-\alpha_t[\nabla_{(i)} f_{t}(\hat{x}_{\kappa-1})].
		\end{align*}
		\quad \quad \quad \quad \ For all $i\neq i_{\kappa}$, such that $\kappa\leq k$:
		\begin{equation*}
			\hat{x}_{(i),\kappa}\leftarrow\hat{x}_{(i),\kappa-1}.
		\end{equation*}
		\State For $\kappa<k$, set $\kappa\leftarrow \kappa+1$ and go to Step 3.
		\State For $\kappa\geq k$, set $t\leftarrow t+1\color{black}, x_{t+1}\leftarrow\Pi_{\Theta}(\hat{x}_{k})$ and go to Step 2\color{black}.
	\end{algorithmic}
\end{algorithm}

It can be seen that in Algorithms \ref{alg4-Ch4} and \ref{alg5-Ch4}, at each time $t$, $k$ updates are performed where $k$ is an integer to be chosen. In Algorithms \ref{alg2-Ch4} and \ref{alg3-Ch4}, however, only one step is performed at each time $t$.
 
%\begin{equation}\label{measure_k}
%\bar{C}_T^k:=\sum_{t=1}^T|x_t^*-x_{t-k}^*|,
%\end{equation}
%The term $\bar{C}_T^k$ measures the accumulated changes of minimizers of (\ref{MainProblem}) over the course of $k$ updates.

\begin{thm}\label{T7-Ch4}
	\color{black}Suppose $\nabla f_t(x_t^*)=0$ for any $t\in\mathbb{Z}_{\geq0}$ and Assumptions \ref{a2-Ch4}-\ref{a3-CH4} hold and the stepsize is chosen such that $\alpha_t=\alpha<\frac{2}{L_{\mathrm{max}}}$. Let $k$ be an integer such that $B_5:=\frac{L}{\mu}(1-2\mu A)^{\frac{k+1-P}{P}}(1+\alpha L_{\mathrm{max}})^{2P-2}<1/2$ holds, then the dynamic regret (\ref{DRR}) achieved by Algorithm \ref{alg4-Ch4} satisfies
	\begin{equation*}
	R_T^d\leq \frac{L}{2-4B_5}(2C_{T,2}+C_2),
	\end{equation*} 
	where $A=(\alpha-\frac{\alpha^2 L_{\mathrm{max}}}{2})/[2(1+\alpha^2 L^2P)]$ and $C_2=|x_1-x_1^*|^2-2|x_1^*-x_0^*|^2$.\color{black}
\end{thm}
\begin{pf}
	By the block descent lemma \cite[Lemma 3.2]{beck2013convergence} and the fact that $L_{\mathrm{max}}\geq L_i$ for all $i$, we have $f_t(\hat{x}_{\kappa+1})\leq f_t(\hat{x}_{\kappa})+\frac{\alpha^2 L_{\mathrm{max}}}{2}|\nabla_{(i)}f_t(\hat{x}_{\kappa})|^2-\alpha|\nabla_{(i)} f_t(\hat{x}_{\kappa})|^2$ at any $t$, for any $0\leq \kappa\leq k-1$. That is 
	\begin{equation*}
		f_t(\hat{x}_{\kappa})-f_t(\hat{x}_{\kappa+1})\geq (\alpha-\frac{\alpha^2 L_{\mathrm{max}}}{2})|\nabla_{(i)} f_t(\hat{x}_{\kappa})|^2,
	\end{equation*}
	Summing over all $P$ blocks in a full round where all components have been updated exactly once leads to
	\begin{equation*}
		f_t(\hat{x}_{\kappa})-f_t(\hat{x}_{\kappa+P})\geq (\alpha-\frac{\alpha^2 L_{\mathrm{max}}}{2})\sum_{i=1}^P|\nabla_{(i)} f_t(\hat{x}_{\kappa+i})|^2.
	\end{equation*}
	By the update equation and Lipschitz continuity of the gradient, we have $|\nabla f_t(\hat{x}_{\kappa})-\nabla f_t(\hat{x}_{\kappa+i})|^2\leq \alpha^2 L^2\sum_{j=1}^i|\nabla_{(j)} f_t(\hat{x}_{\kappa+j-1})|^2$. Therefore
	\begin{equation*}
		\begin{split}
			&|\nabla_{(i)} f_t(\hat{x}_{\kappa})|^2\\
			&\leq (|\nabla_{(i)} f_t(\hat{x}_{\kappa})-\nabla_{(i)} f_t(\hat{x}_{\kappa+i})|+|\nabla_{(i)} f_t(\hat{x}_{\kappa+i})|)^2\\
			&\leq 2|\nabla_{(i)} f_t(\hat{x}_{\kappa+i})|^2+2\alpha^2 L^2\sum_{j=1}^i|\nabla_{(j)} f_t(\hat{x}_{\kappa+j-1})|^2.
		\end{split}
	\end{equation*}
	Summing over $P$ blocks leads to
	\begin{equation*}
		\begin{split}
			\sum_{i=1}^P|\nabla_{(i)} f_t(\hat{x}_{\kappa})|^2 &\leq 2\sum_{i=1}^P(1+(P-i)\alpha^2 L^2)|\nabla_{(i)} f_t(\hat{x}_{\kappa+i})|^2\\
			&\leq 2(1+\alpha^2 P L^2)\sum_{i=1}^P|\nabla_{(i)} f_t(\hat{x}_{\kappa+i})|^2.\\
		\end{split}
	\end{equation*}
	Therefore, 
	\begin{equation}\label{Adefine}
		\begin{split}
			&f_t(\hat{x}_{\kappa})-f_t(\hat{x}_{\kappa+P})\\
			&\geq (\alpha-\frac{\alpha^2 L_{\mathrm{max}}}{2})/[2(1+\alpha^2 L^2P)]|\nabla f_t(\hat{x}_{\kappa})|^2\\
			&:= A|\nabla f_t(\hat{x}_{\kappa})|^2.\\
		\end{split}
	\end{equation}
	By Assumption \ref{a2-Ch4}, minimizing both sides of \eqref{StrC} with respect to $y$, we have $f_t(\hat{x}_{\kappa})-f_t(x_t^*)\leq \frac{1}{2\mu}|\nabla f_t(\hat{x}_{\kappa})|^2$. Then, by (\ref{Adefine}), we have
	\begin{equation*}
		\begin{split}
		&f_t(\hat{x}_{\kappa})-f_t(x_t^*)-(f_t(\hat{x}_{\kappa+P})-f_t(x_t^*))\geq A|\nabla f_t(\hat{x}_{\kappa})|^2\\
		&\geq 2\mu A (f_t(\hat{x}_{\kappa})-f_t(x_t^*)),
	\end{split}
	\end{equation*}
	which further implies, 
	\begin{equation}\label{Convbound-alg2}
		f_t(\hat{x}_{\kappa+P})-f_t(x_t^*)\leq (1-2\mu A)(f_t(\hat{x}_{\kappa})-f_t(x_t^*)).
	\end{equation}
	By Assumption \ref{LipGrad}, we have $|\hat{x}_{\kappa+1}-x_{t}^*|\leq (1+\alpha L_{\mathrm{max}})|\hat{x}_{\kappa}-x_{t}^*|$. For any $k\in\mathbb{Z}_{\geq0}$, there exist $k_1\in\mathbb{Z}_{\geq0}$ and $k_2\in\{1,\ldots,P-1\}$ such that $k=k_1P+k_2$. Then, by Assumptions \ref{a2-Ch4} and \ref{LipGrad}, we have
	\begin{equation}\label{Lybounds-alg2-v2}
	\frac{\mu}{2}|x-x_t^*|^2\leq f_t(x)-f_t(x_t^*)\leq \frac{L}{2}|x-x_t^*|^2,
	\end{equation}
	for any $x\in\mathbb{R}^n$. \color{black}
	Consequently, (\ref{Convbound-alg2}) and (\ref{Lybounds-alg2-v2}) yield
	\begin{equation}\label{Comparisonbound-alg2}
	\begin{split}
	&|\hat{x}_{k}-x_t^*|^2= |\hat{x}_{k_1P+k_2}-x_{t}^*|^2\\
	&\leq \frac{2}{\mu}(1-2\mu A)^{k_1}\frac{L}{2}|\hat{x}_{k_2}-x_t^*|^2\\
	&={\frac{L}{\mu}}(1-2\mu A)^{k_1}|\hat{x}_{k_2}-x_t^*|^2\\
	&\leq {\frac{L}{\mu}}(1-2\mu A)^{\frac{k-k_2}{P}}(1+\alpha L_{\mathrm{max}})^{2P-2}|x_{t}-x_t^*|^2\\
	&\leq {\frac{L}{\mu}}(1-2\mu A)^{\frac{k+1-P}{P}}(1+\alpha L_{\mathrm{max}})^{2P-2}|x_{t}-x_t^*|^2,
	\end{split}
	\end{equation}
	at any $t\in\mathbb{Z}_{\geq 0}$. Since $\alpha<{2}/{L_{\mathrm{max}}}$, we have $A>0$ and $0<1-2\mu A<1$. Hence there exists $k$ such that $B_5=\frac{L}{\mu}(1-2\mu A)^{\frac{k+1-P}{P}}(1+\alpha L_{\mathrm{max}})^{2P-2}<1/2$. Hence, by Lemma \ref{Proj_nonexp}
	\begin{equation}\label{statebounds-alg2-v2}
	|x_{t+1}-x_t^*|^2\leq |\hat{x}_{k}-x_t^*|^2\leq B_5|x_t-x_t^*|^2.
	\end{equation}
	
	By \eqref{reg-strong-c}, we have $R_T^d\leq \frac{L}{2}\sum_{t=1}^{T}|x_t-x_t^*|^2$. Note that 
	\begin{equation}\label{compa2_1-v2}
	|x_{t+1}-x_{t+1}^*|^2\leq2|x_{t+1}-x_t^*|^2+2|x_{t}^*-x_{t+1}^*|^2.
	\end{equation}
	Summing up both sides of (\ref{compa2_1-v2}) from $t=1$ to $t=T-1$ using (\ref{statebounds-alg2-v2}), we have
	\begin{equation*}
	\begin{split}
	(1-2B_5)2R_T^d/L&\leq |x_1-x_1^*|^2+2\sum_{t=1}^{T}|x_{t}^*-x_{t+1}^*|^2\\
	&=2C_{T,2}+C_2.
	\end{split}
	\end{equation*}
	Since $2B_5<1$, $R_T^d\leq \frac{L}{2-4B_5}(2C_{T,2}+C_2)$ follows. \hfill $\qed$
\end{pf}

\begin{thm}\label{T8-Ch4}
	\color{black}Suppose $\nabla f_t(x_t^*)=0$ for any $t\in\mathbb{Z}_{\geq0}$ and Assumptions \ref{a2-Ch4}-\ref{a3-CH4} \color{black} hold and the stepsize is chosen such that $\alpha_t=\alpha<\frac{2}{L_{\mathrm{max}}}$. Let $k$ be an integer such that $B_6:={\frac{L}{\mu}}(1-2\mu \bar{A})^{\frac{k+1-P}{P}}(1+\alpha L_{\mathrm{max}})^{2P-2}<\frac{1}{2}$ holds, then the dynamic regret (\ref{DRR}) achieved by Algorithm \ref{alg5-Ch4} satisfies
	\begin{equation*}
	R_T^d\leq \frac{L}{2-4B_6}(2C_{T,2}+C_2),
	\end{equation*} 
	where $\bar{A}=\frac{1}{P}(\alpha-\frac{\alpha^2 L_{\mathrm{max}}}{2})$ and $C_2=|x_1-x_1^*|^2-2|x_1^*-x_0^*|^2$.
\end{thm}
\begin{pf}
	\color{black}Following similar steps \color{black} as those \color{black} in the proof of Theorem \ref{T7-Ch4} and using (\ref{Convbound-alg3}) and (\ref{Lybounds-alg2-v2}) yield
	\begin{equation}\label{Comparisonbound-alg3}
	\begin{split}
	&|x_{t+1}-x_t^*|\\
	&\leq {\frac{L}{\mu}}(1-2\mu \bar{A})^{\frac{k+1-P}{P}}(1+\alpha L_{\mathrm{max}})^{2P-2}|x_{t}-x_t^*|,
	\end{split}
	\end{equation}
	where $\bar{A}=\frac{1}{P}(\alpha-\frac{\alpha^2 L_{\mathrm{max}}}{2})$ as in the proof of Theorem \ref{T6-Ch4}. Since $\bar{A}>0$, we know there exists $k$ such that $B_6={\frac{L}{\mu}}(1-2\mu \bar{A})^{\frac{k+1-P}{P}}(1+\alpha L_{\mathrm{max}})^{2P-2}<\frac{1}{2}$. Hence, we have
	\begin{equation}\label{statebounds-alg3-v2}
	|x_{t+1}-x_t^*|^2\leq B_6|x_t-x_t^*|^2.
	\end{equation}
	From (\ref{statebounds-alg3-v2}), the following inequality holds by summation of both sides of (\ref{compa2_1-v2}) from $t=1$ to $t=T-1$
	\begin{equation*}
		\begin{split}
			(1-2B_6)2R_T^d/L&\leq |x_1-x_1^*|^2+2\sum_{t=1}^{T}|x_{t}^*-x_{t+1}^*|^2\\
			&=2C_{T,2}+C_2.
		\end{split}
	\end{equation*}
	Since $2B_6<1$, we have $R_T^d\leq \frac{L}{2-4B_6}(2C_{T,2}+C_2)$.\hfill $\qed$
\end{pf}

\section{Numerical simulations}\label{NS4}
%\begin{figure}[h]
%	\centering
%	\subfloat[Static regrets $R_T^s$ of the algorithms]{{\includegraphics[width=3.75cm, height=3.5cm]{SR_N_1.eps} }}%
%	\qquad
%	\subfloat[Time-average static regrets $R_T^s/T$ of the algorithms]{{\includegraphics[width=3.75cm, height=3.5cm]{SR_AVG_N_1.eps} }}%
%	\qquad
%	\subfloat[Dynamic regrets $R_T^d$ of the algorithms]{{\includegraphics[width=3.75cm, height=3.5cm]{DR_N_1.eps} }}%
%	\qquad
%	\subfloat[Time-average dynamic regrets $R_T^d/T$ of the algorithms]{{\includegraphics[width=3.75cm, height=3.5cm]{DR_AVG_N_1.eps} }}%
%	\caption{Static and dynamic regrets of algorithms.}%
%	\label{figure1-ch4}%
%\end{figure} 
%\begin{figure}[h]
%	\centering
%	\subfloat[Static regrets $R_T^s$ of the algorithms]{{\includegraphics[width=3.75cm, height=3.5cm]{SR_H_1.eps} }}%
%	\qquad
%	\subfloat[Time-average static regrets $R_T^s/T$ of the algorithms]{{\includegraphics[width=3.75cm, height=3.5cm]{SR_AVG_H_1.eps} }}%
%	\qquad
%	\subfloat[Dynamic regrets $R_T^d$ of the algorithms]{{\includegraphics[width=3.75cm, height=3.5cm]{DR_H_1.eps} }}%
%	\qquad
%	\subfloat[Time-average dynamic regrets $R_T^d/T$ of the algorithms]{{\includegraphics[width=3.75cm, height=3.5cm]{DR_AVG_H_1.eps} }}%
%	\caption{Static and dynamic regrets of algorithms for slowly-varying $Q_t$.}%
%	\label{figure2-ch4}%
%\end{figure}
\color{black}First, we study the following unconstrained quadratic problem
\begin{equation}\label{Simu-ch4}
\underset{x\in\mathbb{R}^{20}}{\text{min}}\ \frac{1}{2}x^TQ_tx-b^Tx,
\end{equation}
where $b\in\mathbb{R}^{20}$ is a randomly generated constant vector, $Q_t$ is a time-varying matrix that is positive definite for all $t\geq 1$. Moreover, all elements of $Q_t$ are in the closed interval $[1,2]$. As a result, \eqref{Simu-ch4} satisfies Assumptions \ref{a2-Ch4}-\ref{a3-CH4}. Next, we discuss Assumption \ref{a1-Ch4} (i) made in Theorem \ref{T4-Ch4}. The constant $G$ from Assumption \ref{a1-Ch4} (i) is only used to show $R_T^d\leq G\mathbb{E}[\sum_{t=1}^{T}|x_t-x_t^*|]$ such that \eqref{2ndtolast} holds. All other arguments made in the proof of Theorem \ref{T4-Ch4} are still true even if $G=\infty$. Thus, every iteration of Algorithm \ref{alg1-Ch4} will move $x_t$ closer to $x_t^*=Q_t^{-1}b$ expectation-wise, if the stepsize is small enough. Since in our setup $Q_t^{-1}b$ is uniformly bounded, the expectation of $x_t$ will remain bounded too. This means $x_t$ and the gradient $Q_tx_t-b$ must be bounded, almost surely\color{black}. We choose $P=20$ and for each $1\leq i\leq20$, $x_{(i)}$ is a scalar. The horizon length is $T=5000$, and the constant stepsize is chosen as $\alpha_t=\alpha=0.001$. Since the static regret is always upper bounded by the dynamic regret, we only show the plots for dynamic regrets and their time-averages $R_T^d/T$. 
\begin{figure}[ht]
	\includegraphics[width=8.4cm]{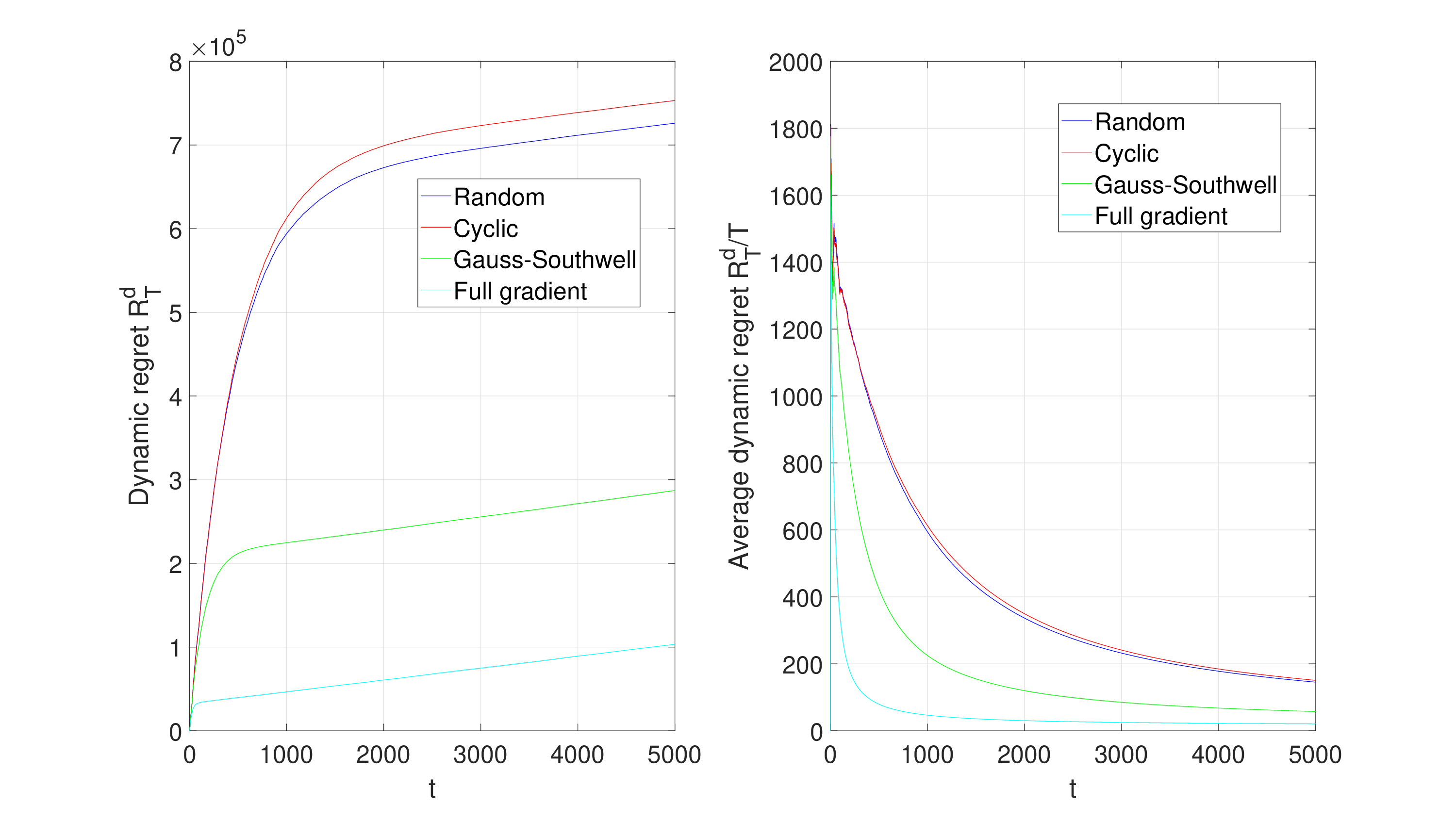}    % The printed column width is 8.4 cm.
	\caption{Plots of the dynamic regrets $R_T^d$ and their time-averages $R_T^d/T$.}
	\label{revised_DR_Normal}
\end{figure}

It can be seen from Fig.~\ref{revised_DR_Normal} that when the constant stepsize is chosen sufficiently small, the regrets in all cases have sublinear growths and therefore their time-averages go to $0$ when $T$ is sufficiently large. This is consistent with our theoretical results from Theorem \ref{T4-Ch4}-\ref{T8-Ch4} on strongly convex functions. 
\begin{figure}[ht]
	\includegraphics[width=8.4cm]{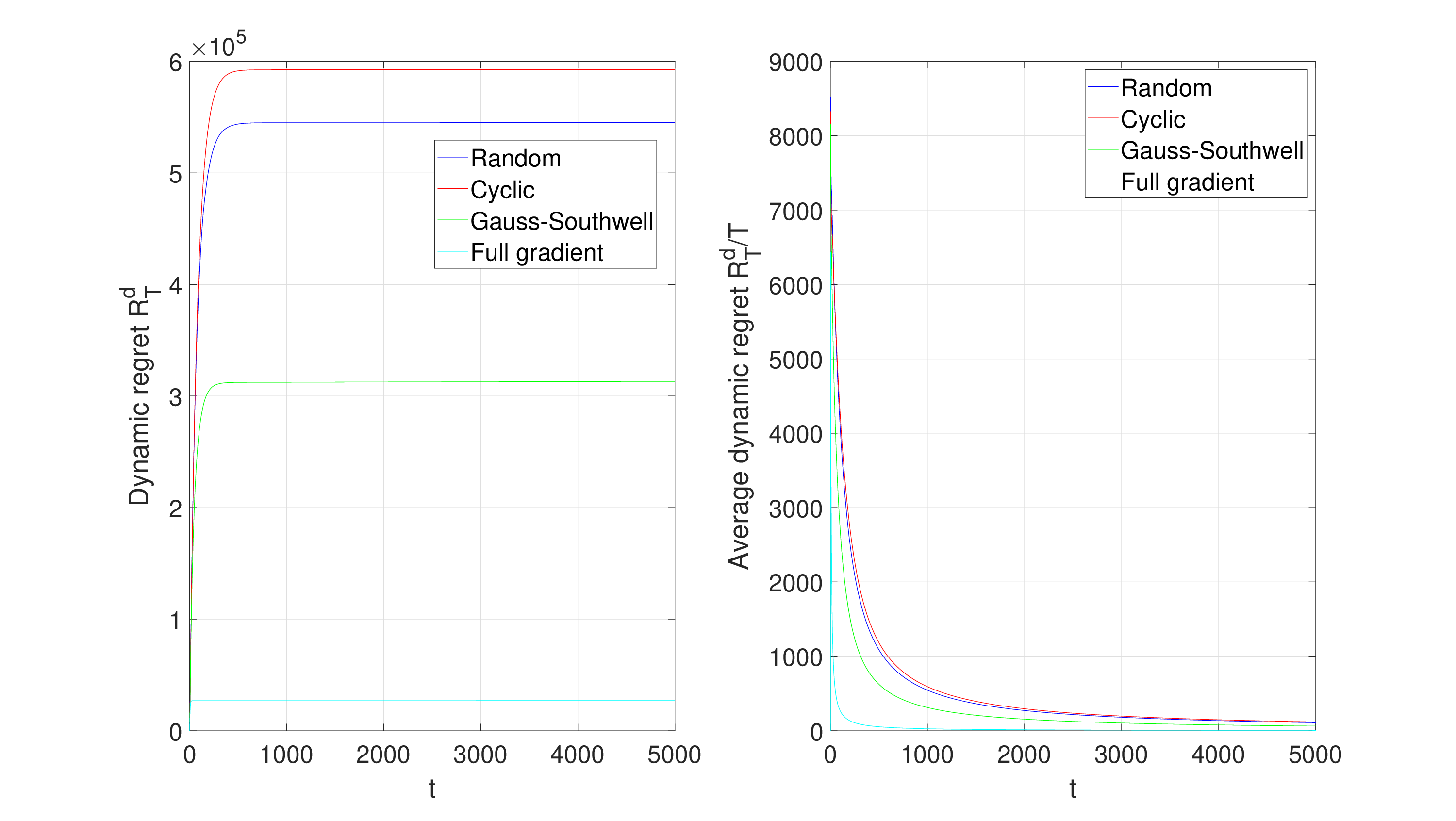}    % The printed column width is 8.4 cm.
	\caption{Plots of the dynamic regrets $R_T^d$ and their time-averages $R_T^d/T$ with slow variation.}
	\label{revised_DR_slow}
\end{figure}
In order to see how the variation of the problem impacts the performance of the algorithms. We add an extra term of $100I_{20}$ such that the matrix $Q_t$ is diagonally dominant and therefore being less sensitive to $t$. We test the online algorithms in this case and the results are shown in Fig.~\ref{revised_DR_slow}. It can be seen that when problem (\ref{Simu-ch4}) varies slowly with respect to time, the curves of the regrets in Fig.~\ref{revised_DR_slow} have a lower \color{black} growth rate \color{black} compared to the regrets shown in Fig.~\ref{revised_DR_Normal}.

\color{black}As expected, the algorithm using full gradient has the best performance in terms of minimizing the dynamic regret. Yet, it is worth mentioning that among the three coordinate descent algorithms considered for this numerical example, Gauss-Southwell rule gives the best performance which is consistent with Remark \ref{Remark-compare}\color{black}. The extra information of the component-wise gradient norms enables a better selection of the coordinate to update. An in-depth theoretical analysis of this problem in an online setting is left for future work.

Next, we consider the following problem of minimizing entropy functions online
\begin{equation}\label{Simu-new}
	\underset{x\in\Theta}{\text{min}}  \sum_{i=1}^{5}\frac{x_{(i)}}{p_{i,t}}\ln\frac{x_{(i)}}{p_{i,t}}.
\end{equation}
The variable $x\in\Theta$ is decomposed into 5 scalar components with the compact constraint set \color{black} $\Theta=\{x\in\mathbb{R}^5\ |\ 0.001\leq x_{(i)}\leq1000, i=1,2,3,4,5\}$\color{black}. The values of $p_{i,t}$ are such that each $p_{i,1}$ is individually and randomly selected from $[1,5]$. For $t\geq2$, $p_{i,t}$ is such that $p_{i,t}=p_{i,t-1}+\frac{1}{t-1}$ for all $i$. It can be verified that the above selection ensures that $|x_{t+1}^*-x_t^*|=\frac{1}{t}$ and $C_T=O(\log T)$. Note that the cost function in \eqref{Simu-new} is convex but not strongly convex and hence only Theorem \ref{T1-Ch4} and Theorem \ref{T3-Ch4} apply. We again show the plots of dynamics regrets of Algorithms \ref{alg1-Ch4}-\ref{alg3-Ch4} and full gradient based algorithms in Fig.~\ref{entropy} with constant stepsize $\alpha=0.05$ and $T=5000$. Moreover, the plot for random coordinate descent is averaged over 1000 runs. 

It can be seen from the Fig.~\ref{revised_DR_Normal} to Fig.~\ref{entropy} that, for the quadratic problem, the dynamic regrets are a lot flatter when $t$ is large. On the other hand, the dynamic regrets in Fig.~\ref{entropy} still exhibit a significant growth when $t=T=5000$ even if we select the time-varying parameters to ensure that $C_T=O(\log T)$. These findings are consistent with the improved regret bounds shown in Theorems \ref{T4-Ch4}-\ref{T8-Ch4} for uniformly strongly convex functions with uniformly Lipschitz gradients.\color{black} 
\begin{figure}[ht]
	\includegraphics[width=8.4cm]{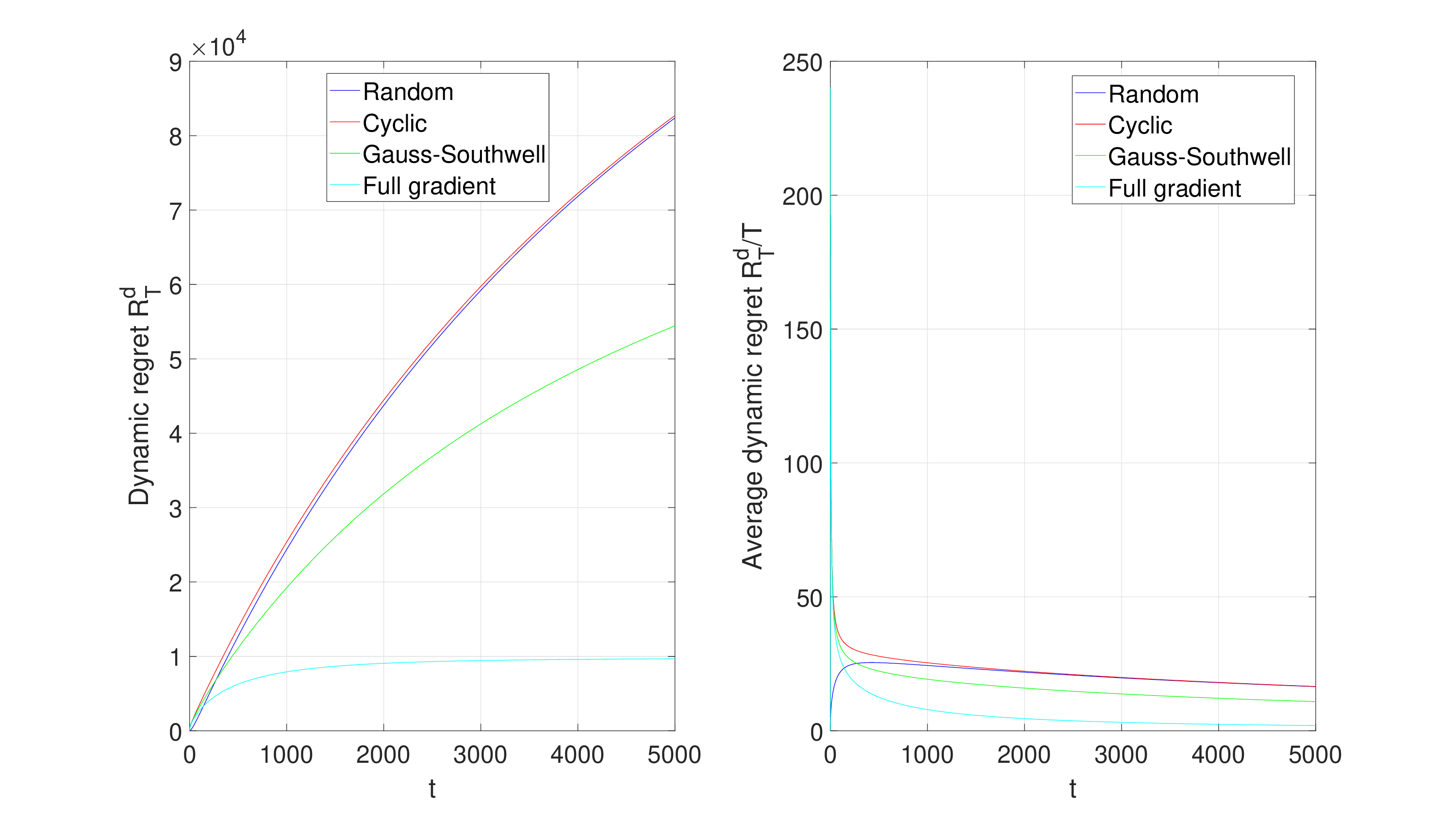}    % The printed column width is 8.4 cm.
	\caption{Plots of the dynamic regrets $R_T^d$ and their time-averages $R_T^d/T$ in the non-strongly convex case.}
	\label{entropy}
\end{figure}
\section{Summary}\label{Summary4}
In this work, we have proposed an online coordinate descent algorithms to deal with optimization problems that may change over time. Three widely used update rules of coordinate descent are considered. Under different assumptions, we have provided different upper bounds on the regrets of these online algorithms. In particular, we have verified that the established regret bounds of these coordinate descent algorithms are of similar orders as those of online gradient descent methods under same settings. The regret bounds proved in this paper are summarized in Table \ref{table1-Ch4}. Lastly, a numerical example was given to illustrate our main result\color{black}. The possibilities of using coordinates with overlapping components is an interesting future research direction, especially for the deterministic case. Another topic of interest is the use of inaccurate gradient information in online coordinate descent algorithms.

\begin{ack}                               % Place acknowledgements
	The authors would like to thank Dr. Yuen-Man Pun for pointing out an error in the proof of Theorem \ref{T7-Ch4} in the initial draft.  % here.
\end{ack}
\bibliographystyle{plain}        % Include this if you use bibtex 
\bibliography{autosam}           % and a bib file to produce the 

\end{document}